\definecolor{c1}{HTML}{E41A1C} % Red
\definecolor{c2}{HTML}{377EB8} % Blue
\definecolor{c3}{HTML}{4DAF4A} % Green
\definecolor{c4}{HTML}{FF7F00} % Orange
\definecolor{c5}{HTML}{984EA3} % Purple
\newtheorem{theorem}{Theorem}[section]
\newtheorem{prop}[theorem]{Proposition}
\newtheorem{lemma}[theorem]{Lemma}
\newtheorem{conj}[theorem]{Conjecture}
\theoremstyle{definition}
\newtheorem{defn}[theorem]{Definition}
\newtheorem*{defn-non}{Definition}
\newlist{Case}{enumerate}{2}
\setlist[Case, 1]{%
    label           =   {\bfseries Case \arabic*.},
    labelindent=0em ,labelwidth=1.3cm, labelsep*=1em, leftmargin =!
}
\setlist[Case, 2]{%
    label           =   {\bfseries Subcase \arabic{Casei}.\arabic*.},
    labelindent=-1em ,labelwidth=1.3cm, labelsep*=1em, leftmargin =!
}
\title{Optimal stability results on color-biased Hamilton cycles} 
\author{
Wenchong Chen\thanks{School of Mathematical Sciences, Nankai University,  Tianjin, China Email: 2212161@mail.nankai.edu.cn}
\and
Mingyuan Rong\thanks{School of Mathematical Sciences, University of Science and Technology of China, Hefei,
China.
Email: rong\_ming\_yuan@mail.ustc.edu.cn. Mingyuan Rong was supported by National Key Research and Development Program of China 2023YFA1010201, the NSFC under Grant No. 12125106 and the Excellent PhD Students Overseas Study Program of the University of Science and Technology of China.}
\and
Zixiang Xu\thanks{Extremal Combinatorics and Probability Group (ECOPRO), Institute for Basic Science (IBS), Daejeon, South Korea. Email: zixiangxu@ibs.re.kr. Zixiang Xu was supported by the Institute for Basic Science (IBS-R029-C4).}
}
\begin{document}
\date{}
\maketitle

\begin{abstract}
We investigate Hamilton cycles in edge-colored graphs with \( r \) colors, focusing on the notion of color-bias (discrepancy), the maximum deviation from uniform color frequencies along a cycle. Foundational work by Balogh, Csaba, Jing, and Pluh\'{a}r, and the later generalization by Freschi, Hyde, Lada, and Treglown, as well as an independent work by Gishboliner, Krivelevich, and Michaeli, established that any \(n\)-vertex graph with minimum degree exceeding \( \frac{(r+1)n}{2r} + \frac{m}{2}\) contains a Hamilton cycle with color-bias at least \(m\), and characterized the extremal graphs with minimum degree \(\frac{(r+1)n}{2r}\) in which all Hamilton cycles are perfectly balanced.

We prove the optimal stability results: for any positive integers \(r\ge 2\) and \( m < 2^{-6} r^{2} n,\) if every Hamilton cycle in an \( n \)-vertex graph with minimum degree exceeding \( \frac{n}{2} + 6r^{2}m \) has color-bias less than \( m \), then the graph must closely resemble the extremal constructions of Freschi, Hyde, Lada, and Treglown. The leading term \( \frac{n}{2} \) in the degree condition is optimal, as it is the sharp threshold for guaranteeing Hamiltonicity. Moreover, we show the additive error term \(\Theta(m)\) is also best possible when \(m\) is large and \(r=2\), since weaker condition \(\frac{n}{2}+o(m)\) allow for a counterexample. Notably, the structural stability threshold \( \frac{1}{2} \) lies strictly below the extremal threshold \( \frac{1}{2} + \frac{1}{2r} \) required to force color imbalance. Our proof leverages local configurations to deduce global structure, revealing a rigid combinatorial dichotomy.

\end{abstract}

\section{Introduction}
A \emph{Hamilton cycle} in a graph is a cycle that visits every vertex exactly once. The existence and structure of Hamilton cycles form a central theme in graph theory, and have been extensively studied in various combinatorial settings~\cite{1987Bela,2016MAMS,2009Bill,1952PLMS,2018IMRN,2022RSARandom,2022JCTB,2009Szabo,2011JAMS,2014TRANSAMS,2014PLMS,2013ADV,P}. Meanwhile, the study of \emph{discrepancy}, which quantifies deviation from uniformity or regularity, has emerged as a fundamental topic in various fields including measure theory,
number theory, geometry and theoretical computer science. In discrete setting, discrepancy theory plays a pervasive role in graph theory~\cite{balogh2020discrepanciesgraphs,2021CPCHS,2024BaloghMatching,2022EJCDomago,2024EJCDomagoj,2021SIDMA,2024JCTBDirac,2022RSARandom,2022JCTB}, additive combinatorics~\cite{2022Fox,2024Max,1996JAMS,1964Roth,2016TaoErdosDis}, and beyond; see~\cite{2000ChazelleBook,1999Matousek} for comprehensive treatments. In this paper, we explore the interplay between these two themes by studying how color imbalance manifests within Hamilton cycles in edge-colored graphs.

Throughout this paper, we assume that \( n \) is sufficiently large. For the sake of clarity of presentation, we omit floor and ceiling signs whenever they are not essential. Given \( V_i, V_j \subseteq V(G) \), we write \( E(V_i) := \{ uv \in E(G) : u, v \in V_i \} \) and \( E(V_i, V_j) := \{ uv \in E(G) : u \in V_i,\ v \in V_j \} \), and let $G[V_i,V_j]$ be the bipartite subgraph of $G$ with vertex set $V_i\cup V_j$ and edge set $E(V_i,V_j)$. Given an edge coloring function $\chi:E(G)\rightarrow [r]$, for a color \( k \in [r] \), we let \( \chi^{-1}(k) := \{ e \in E(G) : \chi(e) = k \} \). For any subgraph \( F \subseteq G \) and any integer \( s \), we say that \( F \) is \emph{\( s \)-nearly empty} if it contains no matching of size \( s \). Given a color \( k \in [r] \), we say that \( F \) is \emph{\((s,k)\)-nearly monochromatic} if \( F \setminus \chi^{-1}(k) \) contains no matching of size \( s \).

Given a graph \( G \) with an edge-coloring function \( \chi : E(G) \rightarrow [r] \), and a Hamilton cycle \( H \subseteq G \), the \emph{color-bias} (\emph{discrepancy}) of \( H \) is defined as
\[
d_{\chi}(H) = \max_{i \in [r]} \left|\, |\chi^{-1}(i) \cap E(H)| - \frac{n}{r} \,\right|.
\]
Intuitively, this quantity measures how evenly the \( r \) colors are distributed along the Hamilton cycle. When \( G \) is very dense, it is often hard to avoid Hamilton cycles with large color-bias. This phenomenon was first rigorously analyzed by Balogh, Csaba, Jing and Pluh\'{a}r~\cite{balogh2020discrepanciesgraphs}, who showed that any \(n\)-vertex \(2\)-colored graph with \(\delta(G) = \left(\frac{3}{4} + \varepsilon\right)n\) must contain a Hamilton cycle \( H \subseteq G \) satisfying \(
d_{\chi}(H) \geq \frac{\varepsilon n}{64}.\) This result was subsequently extended by Freschi, Hyde, Lada, and Treglown~\cite{2021SIDMA}, and independently by Gishboliner, Krivelevich, and Michaeli~\cite{2022JCTB}, who established a sharp result for arbitrary \( r \)-colorings. Below, we state the precise result as presented in~\cite{2022JCTB}, which offers a better quantitative bound.
\begin{theorem}[\cite{2021SIDMA,2022JCTB}]\label{thm:2021SIDMA}
Let $r\ge 2$ and $0\le m\le\frac{n}{28r^{2}}$ be integers and let \( G \) be an \( n \)-vertex graph with \( \delta(G) \ge \frac{(r+1)n}{2r} + m \). Then for any edge-coloring \( \chi : E(G) \rightarrow [r] \), there exists a Hamilton cycle \( H \subseteq G \) satisfying \(
d_{\chi}(H) \ge 2m.
\)
\end{theorem}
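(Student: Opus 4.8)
The plan is to argue by contradiction: suppose every Hamilton cycle $H\subseteq G$ has $d_\chi(H)<2m$, equivalently $|\chi^{-1}(i)\cap E(H)|\in\bigl(\frac{n}{r}-2m,\frac{n}{r}+2m\bigr)$ for every color $i$ and every Hamilton cycle $H$, and derive a violation of $\delta(G)\ge\frac{(r+1)n}{2r}+m$. The first step is to read off constraints on the individual color classes. Since $\delta(G)-\frac{n}{2}\ge\frac{n}{2r}+m$, a standard consequence of the rotation-extension (P\'osa) method is that $G$ contains a Hamilton cycle through any prescribed matching of size $k$ as soon as $\delta(G)\ge\frac{n+k}{2}$, hence through any prescribed matching of size up to $\frac{n}{r}+2m$. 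This gives two complementary facts. \textbf{(I)} No color class $\chi^{-1}(i)$ contains a matching of size $\ge\frac{n}{r}+2m$: threading such a matching into a Hamilton cycle $H$ would force $|\chi^{-1}(i)\cap E(H)|\ge\frac{n}{r}+2m$. \textbf{(II)} For the lower end of the interval: if $M\subseteq\chi^{-1}(i)$ is a matching with $|M|\le\frac{n}{r}-2m$ and $(G-\chi^{-1}(i))\cup M$ has a Hamilton cycle through $M$, that cycle uses at most $|M|\le\frac{n}{r}-2m$ edges of color $i$; so in particular $G-\chi^{-1}(i)$ is non-Hamiltonian for every $i$, and this non-Hamiltonicity is \emph{robust}, being unfixable by adding any bounded color-$i$ matching.

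The second step is to upgrade this to global structure. From \textbf{(I)} and Gallai's inequality $\tau\le 2\nu$, each color class has a vertex cover $S_i$ with $|S_i|<\frac{2n}{r}+4m$, so $G[V\setminus S_i]$ contains no color-$i$ edge while the large minimum degree of $G$ keeps $G[V\setminus S_i]$ dense; from \textbf{(II)} together with the classical obstructions to Hamiltonicity in dense graphs (a small separator leaving too many components, or a large independent set) --- neither of which a bounded matching can erase --- one forces the color-$i$ edges incident to $S_i$ to \emph{themselves} realize such an obstruction, i.e.\ each color must ``act like a cut'' in $G$ in a rigid way. Superimposing these rigid descriptions over the $r$ colors, which partition $E(G)$, should pin $G$ down to (essentially) the extremal construction of Freschi, Hyde, Lada, and Treglown; as those graphs have minimum degree strictly below $\frac{(r+1)n}{2r}$, this contradicts $\delta(G)\ge\frac{(r+1)n}{2r}+m$. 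Keeping every estimate linear in $m$ --- each of the $r$ colors contributing an error on the scale of $\frac{n}{r}$ --- is what produces both the output bias $2m$ and the admissible range $m\le\frac{n}{28r^2}$.

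The hard part is the second step: converting a list of ``$G-\chi^{-1}(i)$ is robustly non-Hamiltonian'' statements into one coherent global picture and testing it against the degree bound, all while keeping the quantitative control sharp enough to land at exactly $2m$. The case $r=2$ deserves separate care, since there fact \textbf{(I)} is vacuous (matchings have size at most $\frac{n}{2}<\frac{n}{2}+2m$) and the whole argument must run through the non-Hamiltonicity of both $\chi^{-1}(1)$ and $\chi^{-1}(2)$ together with $\delta(G)\ge\frac{3n}{4}+m$; here, in the spirit of Balogh, Csaba, Jing, and Pluh\'{a}r, one would extract a near-balanced partition $V=A\cup B$ on which the two colors behave cut-like and derive the contradiction from the excess $\frac{n}{4}+m$ over the Dirac threshold.
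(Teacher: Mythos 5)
This statement is quoted from \cite{2021SIDMA,2022JCTB}; the paper gives no proof of it, so your attempt can only be judged on its own merits. Your Step 1 is sound: with $\delta(G)\ge\frac{(r+1)n}{2r}+m=\frac{n}{2}+\frac{n}{2r}+m$, the P\'osa-type embedding lemma (the paper's \cref{include-path-forest}) does let you thread any matching of size up to roughly $\frac{n}{r}+2m$ into a Hamilton cycle, so fact \textbf{(I)} (no color class has matching number $\ge\frac{n}{r}+2m$) and fact \textbf{(II)} ($G-\chi^{-1}(i)$ is non-Hamiltonian, robustly) are correctly derived. But Step 2, which you yourself identify as ``the hard part,'' is not a proof: the passage from ``each color class is robustly cut-like'' to ``$G$ is essentially the extremal construction'' is pure assertion (``should pin $G$ down''), and this reduction is exactly where all the difficulty of the theorem lives. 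Several of the supporting claims also do not hold as stated: the bound $\tau\le 2\nu$ gives $|S_i|<\frac{2n}{r}+4m$, but then a vertex of $V\setminus S_i$ is only guaranteed $\frac{(r-3)n}{2r}-3m$ neighbours inside $V\setminus S_i$, which is negative for $r=3$ and vacuous for $r=2$, so ``$G[V\setminus S_i]$ stays dense'' fails precisely in the small-$r$ cases.

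More fundamentally, the contradiction you aim for rests on a false premise. The extremal constructions of Freschi, Hyde, Lada, and Treglown have minimum degree \emph{exactly} $\frac{(r+1)n}{2r}$ (in the first construction every vertex of $V_i$, $i<r$, has degree $|V_r|=\frac{(r+1)n}{2r}$), not ``strictly below'' it. Hence showing that $G$ is \emph{approximately} extremal does not contradict $\delta(G)\ge\frac{(r+1)n}{2r}+m$ unless you convert the structural approximation error into a degree deficit exceeding $m$ with explicit constants --- and that quantitative interplay between the $+m$ in the degree hypothesis and the $2m$ in the conclusion is the entire content of the theorem. The actual proofs in \cite{2021SIDMA,2022JCTB} do not go through a global stability classification at all; they argue directly, e.g.\ by exhibiting $\Omega(m)$ disjoint local switching configurations for some color and using an embedding lemma to realize two Hamilton cycles whose color counts differ by $\Omega(m)$ (very much in the spirit of this paper's \cref{remove-bad-bowties}). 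As it stands, your proposal is a plausible opening plus an unexecuted plan, and the plan as described would not close.
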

To demonstrate the sharpness of~\cref{thm:2021SIDMA}, Freschi, Hyde, Lada, and Treglown~\cite{2021SIDMA} provided two extremal constructions showing that the minimum degree threshold \( \delta(G) \geq \frac{r+1}{2r}n \) is best possible.

\paragraph{Extremal construction for general \(r\ge 2\):}  Suppose that \( 2r \mid n \), and partition the vertex set as
\[
V(G) = V_1 \cup V_2 \cup \dots \cup V_r,
\]
where \( |V_i| = \frac{n}{2r} \) for \( 1 \leq i < r \), and \( |V_r| = \frac{(r+1)n}{2r} \). Define the edge set as \(E(G) = \{ uv : u \in V_r,\, v \in V(G) \}.\) That is, every edge has at least one endpoint in \( V_r \). Color the edges so that for \( uv \in E(G) \), we set \( \chi(uv) = i \) if and only if \( u \in V_r \) and \( v \in V_i \) for \( 1 \leq i \leq r \). All remaining edges within \( V_r \) are assigned color \( r \), see the left graph in~\cref{fig:extremal_construction_general_r}. In this construction, each vertex in \( V_i \) (for \( i < r \)) has degree exactly \( |V_r| = \frac{r+1}{2r}n \), and is incident only to color \( i \), while vertices in \( V_r \) are incident to all colors. Since each \( V_i \) for \( i < r \) is an independent set and has size \( \frac{n}{2r} \), any Hamilton cycle \( H \subseteq G \) must use exactly \( 2|V_i| = \frac{n}{r} \) edges of color \( i \). This accounts for \( \frac{(r-1)n}{r} \) edges, so the remaining \( \frac{n}{r} \) edges must be of color \( r \). Therefore, \( H \) contains exactly \( \frac{n}{r} \) edges of each color, and the color-bias satisfies \( d_{\chi}(H) = 0 \).

% Rmy working space

\begin{figure}
    \centering
    \includegraphics[width=1.02\linewidth]{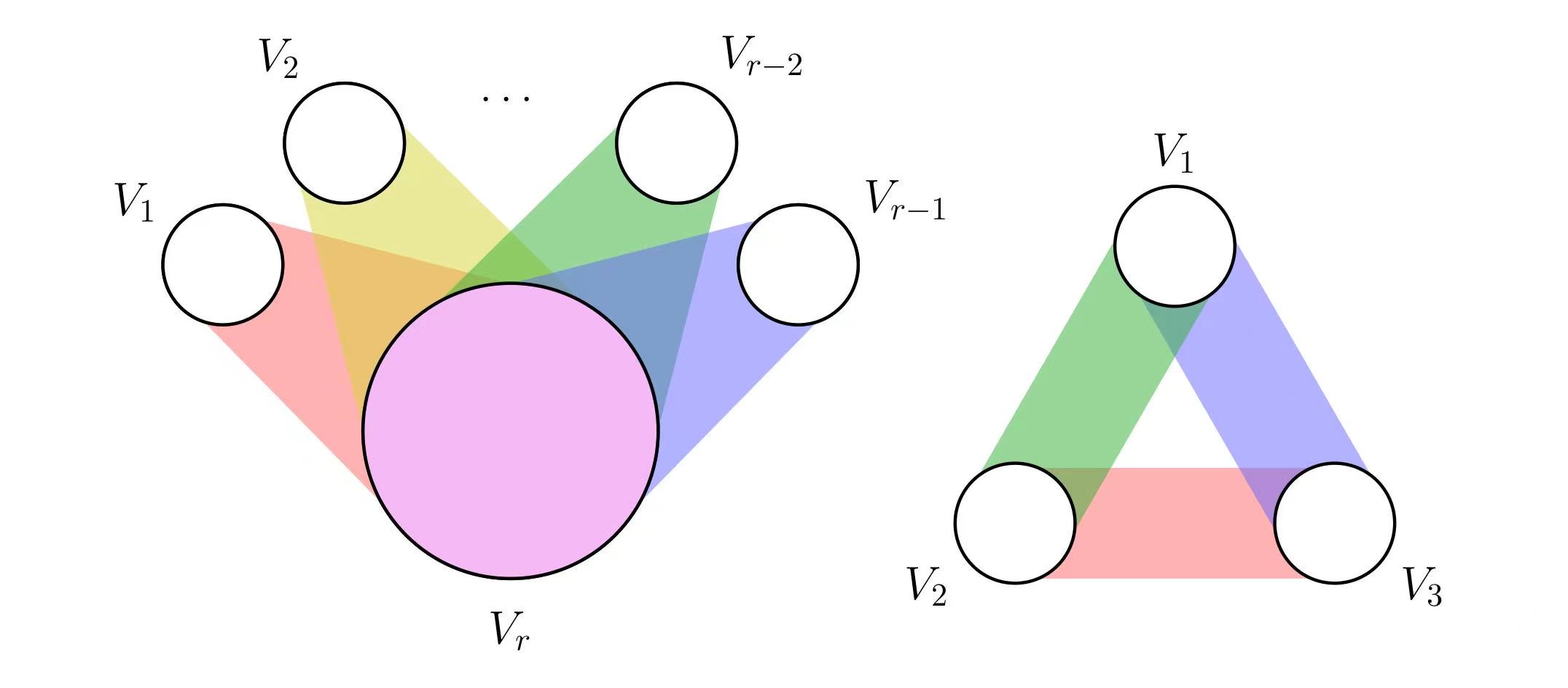}
    \caption{Two extremal constructions for general $r\ge 2$ and for $r=3$}
    \label{fig:extremal_construction_general_r}
\end{figure}

%Rmy working space ending

\paragraph{Extremal Construction for \(r=3\):} Assume \( 3 \mid n \), and partition the vertex set into three parts
\[
V(G) = V_1 \cup V_2 \cup V_3,\quad |V_1| = |V_2| = |V_3| = \frac{n}{3}.
\]
Let \( G \) be the complete 3-partite graph with edges only between distinct parts, and color each edge \( uv \) by setting \( \chi(uv) = k \) if and only if \( u, v \notin V_k \), see the right graph in~\cref{fig:extremal_construction_general_r}. Every vertex in this graph has degree \( \frac{2n}{3} \) and is incident to exactly two colors. Now let \( H \subseteq G \) be any Hamilton cycle, and let \( x_i \) denote the number of edges of color \( i \) in \( H \). Each edge is incident to exactly one part, so, for instance, an edge is colored 2 or 3 if and only if it contains exactly one vertex from \( V_1 \). Since \( |V_1| = \frac{n}{3} \), there are exactly \( 2|V_1| = \frac{2n}{3} \) such edges in \( H \), implying \( x_2 + x_3 = \frac{2n}{3} \). Applying the same reasoning to \( V_2 \) and \( V_3 \), we get: \(x_1 + x_2 = \frac{2n}{3},\) and \( x_1 + x_3 = \frac{2n}{3}.\) Solving this system yields \( x_1 = x_2 = x_3 = \frac{n}{3} \), so again \( d_{\chi}(H) = 0 \) for every Hamilton cycle \( H \subseteq G \).

Given the extremal constructions above, it is natural to ask a stability-type question: if a graph \( G \) satisfies \( \delta(G) \leq \frac{r+1}{2r}n \), and all Hamilton cycles in \( G \) exhibit small color-bias, must \( G \), together with its edge-coloring, be structurally close to these extremal examples? First one can readily verify that the underlying graph admits a partition into parts \( V_1, \dots, V_r \) with the following properties:
\begin{itemize}
    \item In the first construction, the induced subgraph \( G[V_r] \) is exactly monochromatic in color \( r \), and hence \((s,r)\)-nearly monochromatic for any \( s \).
    \item In the first construction, each bipartite subgraph \( G[V_i, V_r] \) with \( i \neq r \) is exactly monochromatic in color \( i \), and thus \((s,i)\)-nearly monochromatic. In the second construction for \(r=3\), for each pair \(\{i,j\}\in\binom{[3]}{2}\), the bipartite subgraph \(G[V_{i},V_{j}]\) is monochromatic in color \(k\in [3]\setminus\{i,j\},\) thus \((s,k)\)-nearly monochromatic.
    \item  In the first construction, the subgraph \( G\left[\bigcup_{i \ne r} V_i\right] \) contains no edges and is thus \( s \)-nearly empty. In the second construction for \( r = 3 \), each part \( V_i \) induces an empty subgraph \( G[V_i] \), implying that all such subgraphs are \( s \)-nearly empty.
\end{itemize}
A partial answer to this question was recently provided by Chen, Cheng and Yan~\cite{chen2025colourbiasedhamiltoncyclesrandomly}, who showed that in an \(n\)-vetrex \(r\)-colored graph $G$, if every Hamilton cycle \( H \subseteq G \) satisfies \(d_{\chi}(H) \leq m,\) then there exists a subset \( U \subseteq V(G) \) with \( |U| = \frac{r+1}{2r}n \), and a color \( k \in [r] \), such that the induced subgraph \( G[U] \) is \((2^{9}r^{2}m,k)\)-nearly monochromatic. Moreover, this stability result is a key to their main result in randomly perturbed settings, see~\cite[Theorem~4.1]{chen2025colourbiasedhamiltoncyclesrandomly}, where the minimum degree condition \(\delta(G)\ge \frac{(r+1)n}{2r}\) is optimal. Moreover, recall from~\cref{thm:2021SIDMA} that the minimum degree threshold \( \delta(G) = \frac{r+1}{2r}n \) is tight for ensuring the existence of a Hamilton cycle with prescribed color-bias. Then it is thus natural to expect that the structural stability result of Chen, Cheng and Yan also reflects this threshold. 

Surprisingly, this is not the case: we show that the same type of stability conclusion continues to hold under a strictly weaker minimum degree condition. Such phenomena, where a stability result holds far below the extremal threshold, are of particular interest. One instructive example comes from the classical Tur\'{a}n problem: although the extremal \(K_3\)-free graphs are complete balanced bipartite graphs with minimum degree \(\frac{n}{2}\), a famous stability result of Andr\'{a}sfai, Erd\H{o}s, and S\'{o}s~\cite{1974ErdosSos} states that ensuring a triangle-free graph is bipartite requires only a minimum degree exceeding \(\frac{2n}{5}\), which is strictly below \(\frac{n}{2}\). More precisely, our result shows that any \( n \)-vertex graph with minimum degree slightly above \( \frac{n}{2} \), in which all Hamilton cycles are nearly color-balanced, must essentially look like one of these extremal constructions, up to the relaxation from exact monochromaticity and emptiness to the approximate notions of \((s,k)\)-nearly monochromatic and \( s \)-nearly empty.

\begin{theorem}\label{main-thm}
For any integer \( r \geq 2 \) with \( r \ne 3 \), let \( m, n, s \) be positive integers satisfying \( m < 2^{-6} r^{2} n \) and \( s = 100 r^2 m \). Let \( G \) be an \( n \)-vertex graph with minimum degree \( \delta(G) = \frac{n}{2} + 6r^{2}m \), and let \( \chi : E(G) \to [r] \) be an edge-coloring. Suppose that every Hamilton cycle \( H \subseteq G \) satisfies \( d_{\chi}(H) < m \). Then there exists a color \( k \in [r] \) and a partition \( V(G) = V_1 \cup V_2 \cup \dots \cup V_r \) such that
\[
|V_k| = \frac{r+1}{2r}n \ \text{and} \ |V_i| = \frac{n}{2r} \quad \text{for all } i \ne k,
\]
and the following properties hold:
\begin{itemize}
    \item The subgraph \( G[V_k] \) is \((s,k)\)-nearly monochromatic;
    \item For every \( i \ne k \), the bipartite subgraph \( G[V_i, V_k] \) is \((s,i)\)-nearly monochromatic;
    \item The subgraph \( G\left[ \bigcup_{i \in [r] \setminus \{k\}} V_i \right] \) is \( s \)-nearly empty.
\end{itemize}
\end{theorem}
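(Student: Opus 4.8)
The plan is to argue the contrapositive at the structural level: I will show that if $G$ admits no partition $V(G)=V_1\cup\cdots\cup V_r$ together with a color $k$ satisfying the three listed properties, then $G$ contains a Hamilton cycle $H$ with $d_{\chi}(H)\ge m$, contradicting the hypothesis. The argument rests on two tools. First, a robust Dirac-type embedding lemma: since $\delta(G)=\tfrac n2+6r^2m$, the graph is Hamiltonian and highly connected, and it admits a Hamilton cycle through any prescribed linear forest with $O(r^2m)$ edges; more usefully, given a Hamilton cycle $H$ and a short linear forest $L$ disjoint from it, one can re-route $H$ through $L$ by local surgery while controlling exactly which $O(|L|)$ edges enter and leave, hence how each color count changes. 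Second, a local-exchange calculus on a fixed Hamilton cycle $H=v_1\cdots v_nv_1$: for non-adjacent edges $v_av_{a+1}$ and $v_bv_{b+1}$ with $v_av_b,v_{a+1}v_{b+1}\in E(G)$, reversing the segment between them yields a new Hamilton cycle differing from $H$ in precisely these four edges, so every color count shifts by at most $2$; concatenating $\Theta(m)$ vertex-disjoint moves that each net $+1$ for a fixed color $j$ converts any Hamilton cycle into one with at least $m$ more edges of color $j$, and symmetrically one can bleed a color down.

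Next I would extract the special color and the partition. For a candidate color $k$, consider a Hamilton cycle $H_k^{\ast}$ maximizing the number of color-$k$ edges; by hypothesis this number lies in $[\,n/r-m,\ n/r+m\,)$. The fact that $H_k^{\ast}$ cannot be improved by the exchange moves forbids certain local patterns — informally, two color-$k$ chords of $H_k^{\ast}$ cannot sit ``in phase'' across two edges of $H_k^{\ast}$ that are not color $k$. Because $\delta(G)>n/2$ forces an abundance of chords, these local prohibitions propagate: I expect them to split $V(G)$, relative to the correct $k$, into a ``heavy'' part $V_k$ — vertices almost all of whose incident edges are color $k$ or land in $V_k$ — together with parts $V_i$ ($i\ne k$) that communicate with $V_k$ essentially through the single color $i$ and carry almost no other edges. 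The exact part sizes $|V_k|=\tfrac{r+1}{2r}n$ and $|V_i|=\tfrac n{2r}$ should then be pinned down by re-running the parity-and-counting argument that makes the extremal constructions perfectly balanced: any deviation in a part size leaves a surplus of ``free'' edges of some color that the exchange calculus turns into discrepancy at least $m$.

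With the partition fixed, each of the three properties is verified by the same contradiction scheme. If $G[V_k]$ contains a matching of size $s=100r^2m$ avoiding color $k$, pigeonhole yields $\ge s/(r-1)>100rm$ edges of one color $j\ne k$ inside $V_k$; re-routing a nearly balanced Hamilton cycle through these edges (the embedding lemma) and then applying $\Theta(m)$ disjoint exchange moves pushes the color-$j$ count beyond $n/r+m$. Here the factor $r^2$ and the constant $100$ in $s$, weighed against the slack $6r^2m$ in the degree bound, are exactly what guarantees that the surgery — which consumes only a bounded fraction of the matching and of the degree surplus — still leaves $\Theta(m)$ usable moves. The identical template, with the appropriate color in the role of $j$, handles a size-$s$ color-avoiding matching in a bipartite piece $G[V_i,V_k]$, and a size-$s$ matching inside $\bigcup_{i\ne k}V_i$ — in the last case the imbalance arises because a Hamilton cycle forced to traverse many edges internal to the small parts must use fewer edges of color $k$ than the balanced value.

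The step I expect to be hardest is the passage from the local obstructions around a single Hamilton cycle $H_k^{\ast}$ to the clean global partition with exactly the prescribed part sizes: the exchange calculus only constrains the immediate vicinity of $H_k^{\ast}$, so one must use high connectivity and the sheer number of edges guaranteed by $\delta(G)>n/2$ to globalize the constraints, while simultaneously excluding the competing $r=3$ extremal family — this is precisely why the theorem assumes $r\ne3$ — and ensuring every auxiliary quantity is driven below the single threshold $s$ rather than merely $O(m)$. A secondary nuisance is handling the two regimes of $m$: when $m$ is small, $s$ is far below $n/r$ and the counting is clean, whereas when $m$ is comparable to $r^2n$, $s$ may exceed $n/r$ and the ``exceed the balanced value by $m$'' arguments must be recast in terms of shifting an already-imbalanced cycle further rather than starting from a balanced one.
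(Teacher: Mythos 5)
Your proposal has the right general flavor (local surgery on Hamilton cycles via a P\'osa-type embedding lemma, plus counting to pin down part sizes), but it has a genuine gap exactly at the step you yourself flag as hardest, and that step is the heart of the theorem. The paper does not work with exchange moves relative to a single extremal cycle $H_k^{\ast}$; it isolates a specific $5$-vertex, $6$-edge configuration (a \emph{bowtie} centred at a vertex $v$ with two disjoint edges in $N(v)$), defines a signed color-counting function on it, and shows via the P\'osa lemma that $G$ cannot contain more than $2rm$ vertex-disjoint ``bad'' bowties --- because the two halves of each bad bowtie can be routed into two Hamilton cycles whose color-$k$ counts differ by at least $2m$. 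Deleting $O(rm)$ vertices then yields an induced subgraph $G'$ with \emph{no} bad bowties at all, and this absence forces every triangle $vuw$ with $uw\in E(G'[N(v)])$ to have one of three rigid color patterns, uniformly over all edges of $G'[N(v)]$ (a short argument using that any two such edges extend to a bad bowtie if their patterns differ). This classifies every vertex into types $A(j,k,\ell)$, $B(k,\ell)$, $C(k)$, and a connectivity argument (using $\delta(G')>|V(G')|/2$) plus the observation that an all-type-$A$ component can only see three colors (whence $r=3$) yields the global dichotomy. Your segment-reversal calculus constrains only chords of one fixed cycle and gives you no analogue of this classification; the sentence ``I expect them to split $V(G)$\dots'' is precisely where a new idea is needed and none is supplied, and your exclusion of the $r=3$ family is asserted rather than derived.

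A second, related gap is in your verification of the three matching properties. You propose to take a color-avoiding matching of size $s$, route it into a Hamilton cycle, and then apply ``$\Theta(m)$ disjoint exchange moves that each net $+1$ for color $j$'' --- but the existence of such moves is exactly what needs proving, and forcing many color-$j$ edges onto a cycle does not by itself create discrepancy unless you already control how the remaining colors distribute (which requires the structural decomposition you have not yet established). In the paper this step is essentially free: once $V(G')$ is covered by the types, the type definitions directly forbid the relevant edges outside exceptional sets of size $O(r^2m)$, so any offending matching lives inside those sets and has fewer than $s=100r^2m$ vertices. I would encourage you to look for a local configuration whose ``non-badness'' pins down \emph{all} edges in a vertex's neighborhood, rather than only edges of one optimal Hamilton cycle; that is the missing ingredient.
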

When \( r = 3 \), the extremal constructions achieving zero color-bias exhibit a qualitatively different behavior from the \( r \ne 3 \) cases. In addition to the typical configuration where one part dominates (that is, one color controls a large fraction of the graph), there exists another extremal example that is fully 3-partite and color-symmetric as we have mentioned above. As a result, when analyzing the structural stability of color-balanced Hamilton cycles for \( r = 3 \), we must allow for both types of extremal behavior. Our stability result thus naturally splits into two cases, each capturing proximity to one of the extremal configurations perfectly.
\begin{theorem}\label{thm:3-colorable}
Let \( m, n, s \) be positive integers satisfying \( m < 2^{-6} \cdot 3^2 \cdot n \) and \( s = 900 m \). Let \( G \) be an \( n \)-vertex graph with minimum degree \( \delta(G) = \frac{n}{2} + 54m \), and let \( \chi : E(G) \to [3] \) be an edge-coloring. Suppose every Hamilton cycle \( H \subseteq G \) satisfies \( d_{\chi}(H) < m \). Then exactly one of the followings occurs:
\begin{enumerate}
\item[\textup{(1)}] There exists a partition \( V(G) = V_1 \cup V_2 \cup V_3 \) with \( |V_1| = |V_2| = |V_3| = \frac{n}{3} \), such that for all distinct \( i, j, k \in [3] \), the following hold:
\begin{itemize}
    \item The subgraph \( G[V_i] \) is \( s \)-nearly empty;
    \item The bipartite subgraph \( G[V_i, V_j] \) is \((s,k)\)-nearly monochromatic.
\end{itemize}

\item[\textup{(2)}] There exists a color \( k \in [3] \) and a partition \( V(G) = V_1 \cup V_2 \cup V_3 \) such that
\[
|V_k| = \frac{2n}{3}, \ \text{and} \ |V_i| = \frac{n}{6} \quad \text{for all } i \in [3]\setminus \{k\},
\]
and the following properties hold:
\begin{itemize}
    \item The subgraph \( G[V_k] \) is \((s,k)\)-nearly monochromatic;
    \item For each \( i \ne k \), the bipartite subgraph \( G[V_i, V_k] \) is \((s,i)\)-nearly monochromatic;
    \item The subgraph \( G\left[ \bigcup_{i \in [3] \setminus \{k\}} V_i \right] \) is \( s \)-nearly empty.
\end{itemize}
\end{enumerate}
\end{theorem}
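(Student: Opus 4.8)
The plan is the standard discrepancy‑via‑switching strategy: from any Hamilton cycle one performs local modifications to drive the colour counts toward an extreme, and the only graphs in which this drive gets stuck before reaching bias $m$ are (approximations of) the two extremal configurations. First I would assemble the toolbox. Since $\delta(G)=\tfrac n2+54m>\tfrac n2$, $G$ is Hamiltonian, and for a Hamilton cycle $H=v_1\cdots v_n$ and any edge $v_iv_{i+1}$ there are at least $2\delta(G)-n-3\ge 100m$ indices $j$ with $v_iv_j,v_{i+1}v_{j+1}\in E(G)$; the $2$‑edge swap that replaces $\{v_iv_{i+1},v_jv_{j+1}\}$ by $\{v_iv_j,v_{i+1}v_{j+1}\}$ (reversing the arc $v_{i+1}\cdots v_j$) produces another Hamilton cycle and changes each colour count $x_c:=|E(H)\cap\chi^{-1}(c)|$ by at most $2$. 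I also need a rotation–extension/incorporation lemma, which again only uses $\delta(G)>\tfrac n2$: an efficient monochromatic configuration (a long monochromatic path, or more generally a near‑spanning union of few monochromatic paths) can be realized inside a Hamilton cycle with only bounded collateral edge‑loss, and dually, if $G-\chi^{-1}(c)$ is ``near‑Hamiltonian'' on too many vertices then $G$ has a Hamilton cycle avoiding colour $c$ too much. Each direction would contradict $d_\chi<m$, so both are forbidden.

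Second, I would extract the coarse structure. Fix, for each colour $c$, the (nonempty) family $\mathcal H_c$ of Hamilton cycles maximizing $x_c$, and within it a lexicographically extremal cycle $H^\ast$. Because every Hamilton cycle has $x_c\in(\tfrac n3-m,\tfrac n3+m)$, the hypothesis says exactly that no swap or incorporation step can push any $x_c$ beyond $\tfrac n3+m$. Applied to $H^\ast$ this yields forbidden local patterns — e.g.\ if $v_iv_{i+1},v_jv_{j+1}$ are non‑$c$‑coloured edges of $H^\ast$ with $v_iv_j,v_{i+1}v_{j+1}\in E(G)$ then neither $v_iv_j$ nor $v_{i+1}v_{j+1}$ has colour $c$. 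Exploring $\mathcal H_c$ by $x_c$‑preserving swaps (every cycle in $\mathcal H_c$ still satisfies the same $\Delta x_c\le 0$ constraint) propagates these constraints across essentially all chords, and the incorporation lemma then forbids $\chi^{-1}(c)$ from containing a monochromatic path of length $>\tfrac n3+O(m)$ and forbids $G-\chi^{-1}(c)$ from being near‑Hamiltonian on $>\tfrac{2n}3+O(m)$ vertices outside a bounded exceptional set. Together this forces, for each colour $c$, a partition of $V(G)$ into two classes such that all but a matching of $O(m)$ colour‑$c$ edges run between the two classes (``edge‑type'') or within one of them (``loop‑type''), and the excess/wrong‑coloured edges form only small matchings — quantitatively, this is the $(s,c)$‑nearly‑monochromatic and $s$‑nearly‑empty language with $s=900m$.

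Third, I would reconcile the three colourwise partitions. Consistency of three such ``edge/loop blow‑ups'' partitioning $E(G)$ leaves only two possibilities. Either all three are edge‑type with underlying pairs $\{12,13,23\}$, which forces a balanced tripartition $V_1\cup V_2\cup V_3$ (size $\tfrac n3$ each) with $G[V_i,V_j]$ $(s,k)$‑nearly monochromatic and each $G[V_i]$ $s$‑nearly empty — outcome (1); or exactly one colour $k$ is loop‑type on a class $V_k$ while the other two are edge‑type between $V_k$ and the two halves $V_i,V_j$ of $V(G)\setminus V_k$, and pinning the sizes from $\delta(G)$ together with $x_c\approx\tfrac n3$ gives $|V_k|=\tfrac{2n}3$, $|V_i|=|V_j|=\tfrac n6$ — outcome (2), the $r=3$ instance of the conclusion of \cref{main-thm}. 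The convenient bookkeeping gadget is the ``dominant colour'' $D(v)$, the colour (if any) with $d_c(v)\ge n/2$: at most one colour is dominant at a vertex, $D$ is defined for almost every vertex in outcome (2) and for almost no vertex in outcome (1), and this all‑or‑nothing split is the rigid dichotomy; mutual exclusivity of (1) and (2) is then immediate from the largest class having size $\tfrac n3$ versus $\tfrac{2n}3$.

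The step I expect to be the main obstacle is the local‑to‑global upgrade in the second and third paragraphs: a single $2$‑edge swap perturbs a colour count by only $O(1)$ and probes only a bounded neighbourhood of one cycle, while the conclusion must forbid wrong‑coloured matchings of size $s=\Theta(m)$ everywhere. Bridging this requires composing many vertex‑disjoint swaps so their effects on $x_c$ add, navigating the extremal cycle families $\mathcal H_c$, interleaving with rotation–extension, and carefully corralling the $O(m)$ exceptional vertices that the slender $54m$ of slack in $\delta(G)$ barely affords. The symmetric outcome (1), special to $r=3$, sharpens this: there is no single large class to anchor the argument, so one must run the extremal‑cycle analysis symmetrically in all three colours and glue the three resulting partitions into one — and throughout, making the incorporation lemma quantitatively precise enough to close the constants (slack $54m$ against error $s=900m$) is where the delicate estimates live.
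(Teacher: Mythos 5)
Your high-level philosophy (local switchings on Hamilton cycles certify colour imbalance, and the obstruction to switching forces extremal structure) matches the paper's, but the proposal has a genuine gap at exactly the point you flag as ``the main obstacle,'' and that gap is where the actual content of the proof lives. A single $2$-opt swap changes a colour count by $O(1)$, yet the hypothesis only forbids a bias of $m$, so you must accumulate $\Theta(m)$ independent local imbalances into \emph{one} pair of Hamilton cycles. Your swaps cannot be composed disjointly: replacing $\{v_iv_{i+1},v_jv_{j+1}\}$ by $\{v_iv_j,v_{i+1}v_{j+1}\}$ reverses the entire arc $v_{i+1}\cdots v_j$, so two such swaps interfere, and ``navigating $\mathcal H_c$ by $x_c$-preserving swaps'' only constrains chords of particular cycles, not arbitrary triangles of $G$. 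The paper resolves this with a different gadget and a different embedding order: it defines a five-vertex \emph{bowtie} $v_1v_2v_3v_4v_5$ carrying two interchangeable spanning path systems $\{v_1v_2,v_1v_3,v_4v_5\}$ and $\{v_2v_3,v_1v_4,v_1v_5\}$ whose colour-$k$ counts differ by $f(B,k)$; it takes $2m$ vertex-disjoint $k$-bad bowties, and \emph{first} fixes the union $L_1$ of their path systems ($6m$ edges), \emph{then} invokes P\'osa's lemma (\cref{include-path-forest}, which is what the $6r^2m$ of degree slack is for) to find one Hamilton cycle containing all of $L_1$, and finally exchanges $L_1$ for $L_2$ in a single step so the $2m$ contributions add. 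Your ``incorporation lemma'' is the right instinct but is never made precise, and without the disjoint-gadget-first ordering the additivity does not follow.

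The second half of your plan is also thinner than it needs to be. After the switching step, the paper does not work colour-by-colour with three separate partitions to be reconciled; instead, the absence of bad bowties in the residual graph $G'$ forces every vertex into one of three rigid local types $A(j,k,\ell)$, $B(k,\ell)$, $C(k)$ (all edges of $G'[N(v)]$ must have the same ``triangle type,'' proved by finding a large matching in $N(v)$ and comparing through a common disjoint edge), and then a neighbour-type propagation plus connectivity of $G'$ yields the global dichotomy in one stroke --- with the all-$A$ branch automatically forcing $r=3$ and outcome (1), and the $B\cup C$ branch giving outcome (2). Your ``dominant colour'' bookkeeping and the claim that consistency of three edge/loop blow-ups ``leaves only two possibilities'' would need a comparable case analysis that the proposal does not supply. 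As written, the proposal is a plausible roadmap but not a proof: the two load-bearing ideas --- the disjoint-gadget accumulation via P\'osa and the local-type classification --- are missing.
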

We remark that the minimum degree condition \( \delta(G) \ge \frac{n}{2} + \Theta(m) \) in our result is essentially optimal. First, by the classical theorem of Dirac~\cite{1952PLMS}, any graph with \( \delta(G) < \frac{n}{2} \) may not contain a Hamilton cycle at all, in which case the notion of color-bias is not even well-defined. Furthermore, the error term \( \Theta(m) \) cannot, in general, be significantly improved: for \( r = 2 \) and large \(m\), there exists a different type of construction of an edge-colored graph \( G \) with minimum degree \( \frac{n}{2} + o(m) \), in which every Hamilton cycle has color-bias strictly less than \( m \), see~\cref{fig:new_construction}. 
\paragraph{A different construction:} 
Let \( r = 2 \), and let \( t = o(m) \). Define a graph \( G \) on \( n \) vertices by partitioning \( V(G) = V_0 \cup V_1 \cup V_2 \), where \( |V_1| = |V_2| = \frac{n - t}{2} \) and \( |V_0| = t \). The edge set is defined as follows:
\begin{itemize}
    \item \( G[V_0 \cup V_2] \) is a complete graph, with all edges colored red;
    \item \( G[V_1] \) is a complete graph, with all edges colored blue;
    \item \( G[V_0, V_1] \) is a complete bipartite graph, with all edges colored blue.
\end{itemize}
Then \( \delta(G) = \frac{n+t}{2} \), and every Hamilton cycle \( H \subseteq G \) has color-bias strictly less than \( m \). Actually each vertex in $V_2$  must be incident to two red edges on $H$, and each vertex in $V_1$ must be incident to two blue edges, therefore $H$ contains at least $\frac{n-t}{2}$ edges of both colors, and $d_{\chi}(H)\leq \frac{t}{2}<m$. This shows that the error term \( \Theta(m) \) in~\cref{main-thm} is tight up to constants when \(r=2\).

\begin{figure}[h!]
    \centering
    \includegraphics[width=0.7\linewidth]{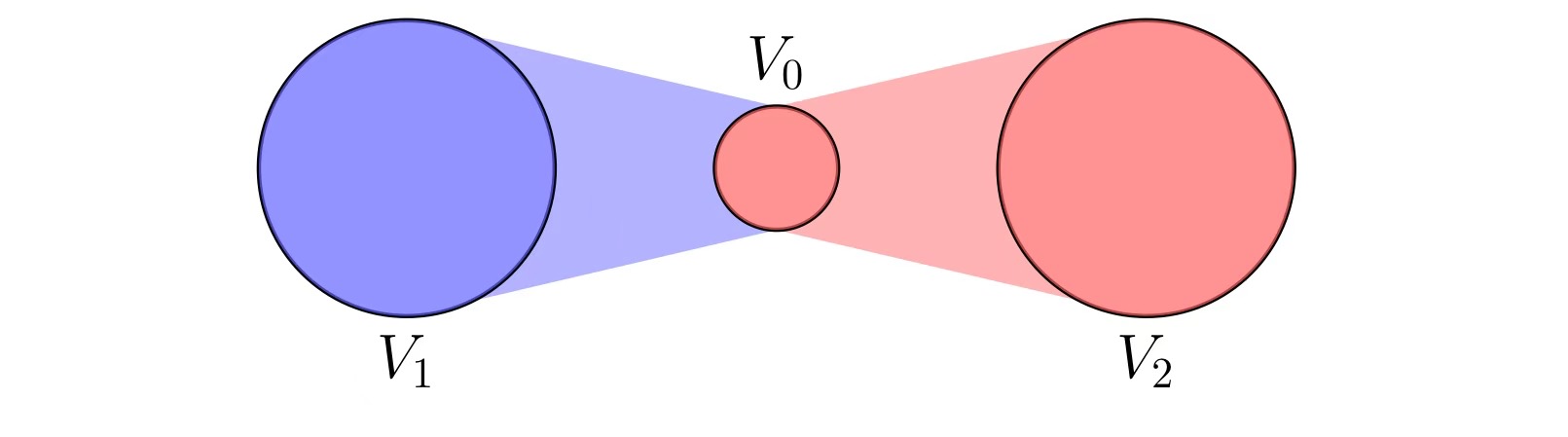}
    \caption{A construction witnessing the optimality of the error term}
\label{fig:new_construction}
\end{figure}

\section{The proofs}
Let \(G\) be a graph on \(n\) vertices with minimum degree \(\delta(G) = \frac{n}{2} + 6r^{2}m\), and let \(\chi\) be an \(r\)-coloring of \(G\) such that for every Hamilton cycle \(H\), we have \(d_{\chi}(H) \leq m\). In a seminal result, Lajos P\'{o}sa~\cite{P} proved that a dense graph can extend any small disjoint path system into a Hamiltonian cycle. More precisely, we will use the following classical result proven in~\cite{P}.
\begin{lemma}[\cite{P}]\label{include-path-forest}
For any integer $C\ge 1$, let $G$ be an graph with minimum degree $\frac{n}{2}+\frac{C+1}{2}$.
Let $L$ be a union of vertex-disjoint paths 
such that $|E(L)|\leq C$, then there exists a Hamilton cycle of $G$ containing $L$ as a subgraph.     
\end{lemma}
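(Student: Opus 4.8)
The plan is to deduce the lemma from the Bondy--Chv\'atal closure together with a P\'osa-type segment reversal that is made sensitive to the prescribed linear forest. Since enlarging the edge set of $G$ cannot destroy a Hamilton cycle through $L$, I may assume $|E(L)| = C$; if $C = 0$ the statement is immediate from Dirac's theorem, so assume $C \ge 1$. First I would note that every pair of vertices $u,v$ of $G$ satisfies $\deg_G(u) + \deg_G(v) \ge 2\delta(G) = n + C + 1 > n$, so the closure of $G$ is the complete graph $K_n$. Fix an order in which the missing edges are added, $G = G_0 \subsetneq G_1 \subsetneq \dots \subsetneq G_t = K_n$, where $G_{i+1} = G_i + e_{i+1}$ and the endpoints $x,y$ of $e_{i+1}$ satisfy $\deg_{G_i}(x) + \deg_{G_i}(y) \ge n$. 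A key bookkeeping point: since $L \subseteq E(G_0) \subseteq E(G_i)$, every added edge $e_{i+1}$ lies \emph{outside} $L$. In $K_n$ one trivially finds a Hamilton cycle $H_t$ containing $L$, simply by laying the vertex-disjoint paths of $L$ down as consecutive arcs and appending the remaining vertices.

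The core of the argument is a downward induction: given a Hamilton cycle $H_{i+1} \subseteq G_{i+1}$ with $L \subseteq E(H_{i+1})$, I would produce one inside $G_i$. If $e_{i+1} \notin E(H_{i+1})$ there is nothing to do. Otherwise write $e_{i+1} = uv \in E(H_{i+1})$ and let $w_1 = u, w_2, \dots, w_n = v$ be the Hamilton path $H_{i+1} - uv$, which lies in $G_i$ and still contains $L$ (as $uv \notin L$). Call an index $j \in [n-1]$ \emph{pivotable} if $u w_{j+1} \in E(G_i)$ and $v w_j \in E(G_i)$; for such $j$, reversing the segment $w_{j+1}\dots w_n$ produces the Hamilton cycle
\[
w_1 w_2 \cdots w_j \, w_n w_{n-1} \cdots w_{j+1}\, w_1
\]
of $G_i$, whose edge set is $\big(E(H_{i+1} - uv)\setminus\{w_j w_{j+1}\}\big) \cup \{w_j w_n,\, w_1 w_{j+1}\}$. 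Hence this rerouted cycle contains $L$ as soon as the unique discarded path-edge $w_j w_{j+1}$ avoids $L$. A standard count shows the sets $\{j : u w_{j+1}\in E(G_i)\}$ and $\{j : v w_j \in E(G_i)\}$ both sit in $[n-1]$ and have sizes $\deg_{G_i}(u) \ge \delta(G)$ and $\deg_{G_i}(v)\ge\delta(G)$, so the number of pivotable indices is at least $2\delta(G) - (n-1) = C + 2$. These $C+2$ pivotable indices yield $C+2$ \emph{distinct} path-edges $w_j w_{j+1}$, whereas $L$ has only $|E(L)| = C$ edges; by pigeonhole at least two pivotable $j$ satisfy $w_j w_{j+1}\notin L$, and any such $j$ gives the desired $H_i$. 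Iterating down to $i=0$ produces a Hamilton cycle of $G$ containing $L$.

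The step I expect to be the crux is exactly this interaction between the rotation and $L$: a careless closure-reversal can break edges of $L$, and the whole point of the slack $\tfrac{C+1}{2}$ in the degree hypothesis is to upgrade the ``single pivotable index'' that the bare closure condition $\deg(u)+\deg(v)\ge n$ would provide into ``at least $C+2$ pivotable indices,'' leaving enough room to dodge all $C$ edges of $L$. An essentially equivalent route I considered, and would mention as an alternative, is to first contract each path of $L$ down to its two endpoints (deleting all internal vertices), thereby reducing to the special case where $L$ is a matching of size $s \le C$ and $\delta \ge \tfrac{n'}{2} + \tfrac{s+1}{2}$ on the reduced graph, and then run the identical closure/rotation scheme only needing to avoid the $s$ matching-edges; I would keep the direct version above since it sidesteps the bookkeeping of re-expanding contracted paths. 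The remaining points are routine: $C \le n-3$ is forced by $\delta(G) \le n-1$, $K_n$ plainly admits a Hamilton cycle through any linear forest on at most $n$ vertices, and the $C=0$ case is just Dirac.
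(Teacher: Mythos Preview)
The paper does not prove this lemma at all: it is quoted as a classical result of P\'osa and used as a black box, so there is no in-paper proof to compare against. That said, your argument is correct and is essentially the standard proof one would give. The closure step is fine because $G\subseteq G_i$ forces $\deg_{G_i}(u),\deg_{G_i}(v)\ge\delta(G)$, so your count of pivotable indices as at least $2\delta(G)-(n-1)=C+2$ is valid, and the single discarded path-edge $w_jw_{j+1}$ is indeed the only edge of $H_{i+1}-uv$ lost under the segment reversal, so avoiding the $C$ edges of $L$ with $C+2$ choices works exactly as you say. The observation that each added closure edge $e_{i+1}$ lies outside $L$ (since $L\subseteq E(G)$) is the one place a careless write-up can slip, and you handle it cleanly. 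Minor quibble: the sentence ``since enlarging the edge set of $G$ cannot destroy a Hamilton cycle through $L$, I may assume $|E(L)|=C$'' is a non sequitur --- you never actually use $|E(L)|=C$, only $|E(L)|\le C$, and the reduction you seem to want is about padding $L$, not $G$. Just delete that sentence.
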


\subsection{High-level overview of the proof}
Our goal is to show that any \( n \)-vertex graph \( G \) with minimum degree slightly above \( \frac{n}{2} \), in which every Hamilton cycle is nearly color-balanced, must admit a rigid structural decomposition that prohibits large matchings in the complement of certain color classes. To convey the core insights of our argument, we begin with a brief high-level overview of the proof.

The proof is driven by a small yet powerful configuration, which we call a \emph{bad bowtie}. This five-vertex, six-edge structure is carefully designed to act as a local witness to color imbalance: modifying a Hamilton cycle along a bad bowtie changes the number of edges of a given color by a fixed amount. This sensitivity allows us to rigorously convert local color patterns into global color-bias violations, which is not merely technical. In~\cref{remove-bad-bowties}, we take advantage of~\cref{include-path-forest} to show that the presence of many vertex-disjoint bad bowties would enable us to construct two Hamilton cycles with noticeably different color distributions, contradicting the assumed near-balance. As a consequence, we can eliminate all bad bowties by removing only a negligible number of vertices, resulting in a large induced subgraph \( G' \) that contains no such local imbalances.

The absence of bad bowties in \( G' \) imposes strong local restrictions on how edge colors can be distributed in each vertex's neighborhood. We exploit this rigidity in~\cref{local'} by classifying all vertices into three structural types, labelled \( A(j,k,\ell) \), \( B(k,\ell) \), and \( C(k) \), based on their incident color patterns and neighborhood structure. This classification reveals the internal constraints each vertex must satisfy, and lays the foundation for a full structural description of \( G' \) in~\cref{main-structure}.

The final step is to understand how these vertex types interact on a global scale. When \( r \neq 3 \), we show that all vertices in \( G' \) must belong to a configuration dominated by a single color \( k \), leading to the matching-exclusion structure asserted in~\cref{main-thm}. When \( r = 3 \), a second, highly symmetric configuration becomes possible: the vertex set decomposes into three parts, each interacting in a tightly controlled way with the others via fixed color patterns, as captured in~\cref{thm:3-colorable}. In both cases, the resulting structural decomposition ensures that any large matching avoiding a given color must intersect the small set of deleted vertices. As a result, every such forbidden matching is bounded in size.

\subsection{The bad bowties and removal }
We begin by introducing a key structural configuration, referred to as a \emph{bowtie}, which plays a central role in our proof. This configuration has been extensively studied in the literature~\cite{1995Erdos,2020EUJC,LFP2025EUJC}.

\begin{defn}
A \emph{bowtie} \(B\) is a graph with
\[
V(B) = \{v_1, v_2, v_3, v_4, v_5\}, \quad
E(B) = \{v_1v_2, v_1v_3, v_1v_4, v_1v_5, v_2v_3, v_4v_5\}.
\]
We also denote the bowtie by the ordered sequence \(v_1v_2v_3v_4v_5\).
\end{defn}

Given a color $k\in [r]$ and a bowtie $B:=v_1v_2v_3v_4v_5$, we 
define the \emph{color-counting function} as 
$$f(B,k)=f(v_1v_2v_3v_4v_5,k)=\mathbf{1}_k(v_1v_2)+\mathbf{1}_k(v_1v_3)+\mathbf{1}_k(v_4v_5)-\mathbf{1}_k(v_1v_4)-\mathbf{1}_k(v_1v_5)-\mathbf{1}_k(v_2v_3),$$
where \(\mathbf{1}_k : E(G) \to \{0, 1\}\) is the indicator function defined by
\[
\mathbf{1}_k(e) =
\begin{cases}
1 & \text{if } \chi(e) = k, \\
0 & \text{otherwise}.
\end{cases}
\]
We say that a bowtie \(B\) is \emph{\(k\)-bad} if \(f(B, k) \ne 0\), and \emph{bad} if it is \(k\)-bad for some color \(k \in [r]\). In~\cref{fig:bowtie_gallery}, we display all non-isomorphic bowtie configurations that are not bad, as well as several representative examples of bad bowties.

\begin{figure}[h!]
    \centering
    \includegraphics[width=1\linewidth]{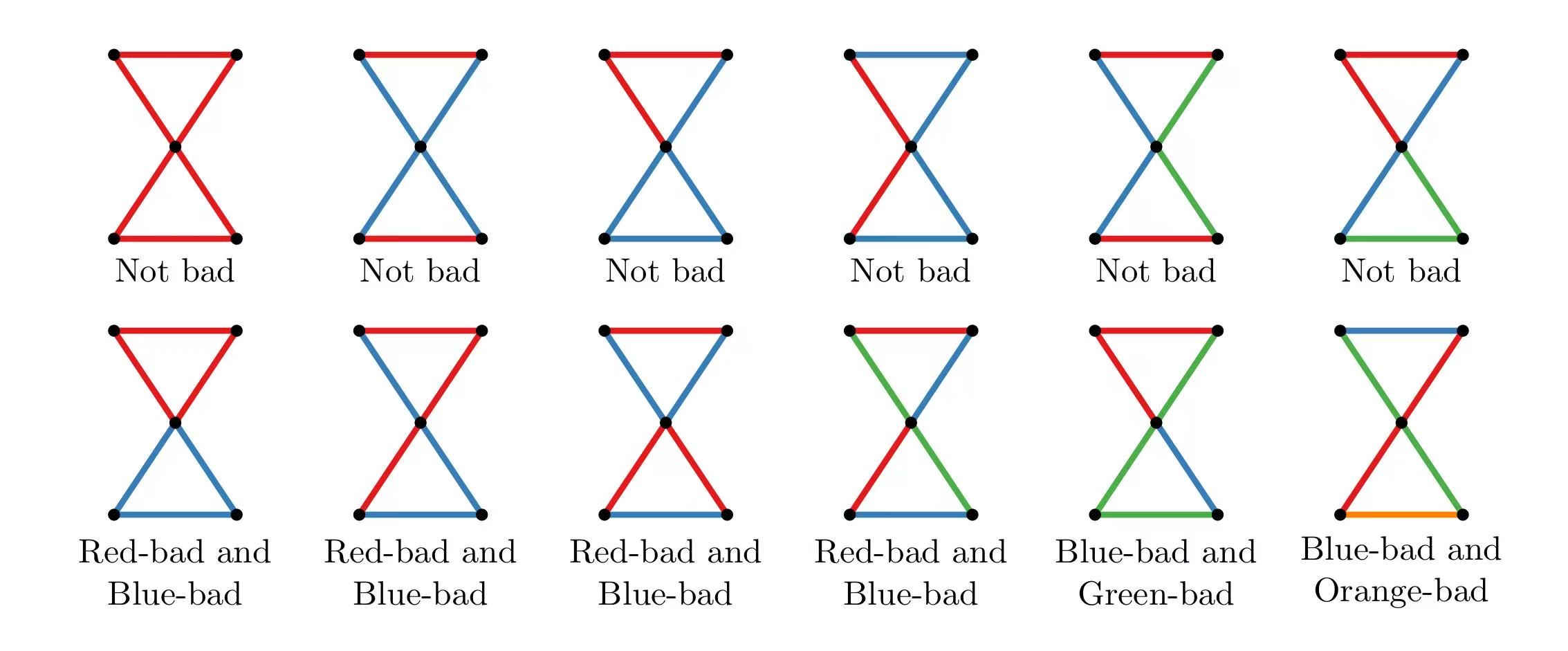}
    \caption{The top row illustrates all structurally distinct bowties that are not bad, while the bottom row displays representative examples of bad bowties}
\label{fig:bowtie_gallery}
\end{figure}

One key structural observation in our proof is that we can always find a large induced subgraph of \(G\) that contains no bad bowties.

\begin{prop}\label{remove-bad-bowties}
There exists an induced subgraph \(G' \subseteq G\) such that \(|V(G) \setminus V(G')| \leq 10rm\), and \(G'\) contains no bad bowties.
\end{prop}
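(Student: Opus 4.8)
The plan is to prove the contrapositive: if no such large induced subgraph exists, then $G$ contains many vertex-disjoint bad bowties, and we use these to build two Hamilton cycles whose color distributions differ by at least $m$ in some color, contradicting $d_\chi(H)<m$ for all Hamilton cycles $H$. First I would make this precise via a simple greedy argument: suppose that every induced subgraph obtained by deleting at most $10rm$ vertices still contains a bad bowtie. Then I can greedily extract a family of pairwise vertex-disjoint bad bowties $B_1,\dots,B_t$ with $t$ on the order of $2rm$ (each bowtie uses $5$ vertices, and after deleting the vertices used so far---at most $10rm$ of them---a bad bowtie still exists, so the process runs for at least $2rm$ steps). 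By pigeonhole, at least $t/r \geq 2m$ of these bowties are $k$-bad for one fixed color $k\in[r]$; discard the rest and relabel, so we have vertex-disjoint $k$-bad bowties $B_1,\dots,B_{2m}$, and by replacing each $B_i=v_1v_2v_3v_4v_5$ with its mirror if necessary (the color-counting function changes sign under the swap that reverses the roles of the two ``wings'', so we may assume) we can arrange $f(B_i,k)\geq 1$ for every $i$.

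The heart of the argument is the local exchange: each $k$-bad bowtie $B=v_1v_2v_3v_4v_5$ contains two vertex-disjoint paths realizing the two ``states'' of the bowtie---namely the path $P^+ = v_2v_1v_3$ together with the path $v_4v_5$ (using the three ``plus'' edges $v_1v_2, v_1v_3, v_4v_5$ and covering all five vertices), versus the path $P^- = v_4v_1v_5$ together with the path $v_2v_3$ (using the three ``minus'' edges $v_1v_4, v_1v_5, v_2v_3$, also covering all five vertices with endpoints $v_2,v_3,v_4,v_5$). Crucially, in both states the five vertices are covered by two paths with the \emph{same} four endpoints $\{v_2,v_3,v_4,v_5\}$. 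So I would form two linear forests: $L^+ := \bigcup_{i=1}^{2m} (P_i^+ \text{-system})$ and $L^- := \bigcup_{i=1}^{2m}(P_i^- \text{-system})$. Each is a union of vertex-disjoint paths, with $|E(L^+)| = |E(L^-)| = 3\cdot 2m = 6m$. Since $\delta(G) = \frac{n}{2} + 6r^2m \geq \frac{n}{2} + \frac{6m+1}{2}$ (as $r\geq 2$ and $n$ is large), \cref{include-path-forest} applies with $C=6m$, giving Hamilton cycles $H^+\supseteq L^+$ and $H^-\supseteq L^-$. Now the number of color-$k$ edges of $H^+$ inside the bowties exceeds that of $H^-$ by exactly $\sum_{i} f(B_i,k)\geq 2m$; but I must be careful, since $H^+$ and $H^-$ may differ \emph{outside} the bowties too. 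This is the point to address carefully.

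The cleanest fix---and the step I expect to be the main obstacle---is to build $H^+$ and $H^-$ so that they agree outside the bowties. One way: first apply \cref{include-path-forest} to the single linear forest $L^-$ to get $H^-$, then observe that $H^-$ restricted to the bowties consists, for each $i$, of exactly the two paths $P_i^-$-system (since $L^-\subseteq H^-$ and these paths already saturate the five vertices of $B_i$ with the correct endpoints, $H^-$ cannot use any other edge at those vertices). Then define $H^+$ by, within each bowtie $B_i$, deleting the two edges of $P_i^-$-system incident structure and inserting the three plus-edges appropriately---wait, the edge counts don't match ($3$ versus $3$), so actually the local swap replaces $3$ edges by $3$ edges on the same $5$ vertices, keeping all four endpoints fixed, hence $H^+$ obtained this way is again a single Hamilton cycle (a $2$-regular connected spanning subgraph: connectivity is preserved because within each bowtie both configurations connect the four ``boundary stubs'' in the same pairing pattern---one needs to check this pairing claim for each non-isomorphic bad bowtie, which is a finite case check, possibly aided by \cref{fig:bowtie_gallery}). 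Then $H^+$ and $H^-$ agree edge-for-edge outside $\bigcup_i E(B_i)$, so $|\chi^{-1}(k)\cap E(H^+)| - |\chi^{-1}(k)\cap E(H^-)| = \sum_{i=1}^{2m} f(B_i,k) \geq 2m$, forcing $\max(d_\chi(H^+), d_\chi(H^-)) \geq m$, a contradiction. I would therefore structure the write-up around: (i) the greedy extraction and pigeonhole; (ii) a lemma stating that in any bad bowtie the plus-configuration and minus-configuration realize the same pairing of the boundary vertices into two paths (the finite check); and (iii) the two-Hamilton-cycle construction and the color count, where the only subtlety is guaranteeing connectivity of $H^+$, handled by (ii).
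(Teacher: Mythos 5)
Your proposal is correct and follows essentially the same route as the paper: a maximal/greedy family of vertex-disjoint bad bowties, pigeonhole to fix one color $k$ with $2m$ bowties of positive $f$-value (flipping wings to fix signs), embedding the "minus" linear forest into a Hamilton cycle via \cref{include-path-forest}, and performing the local three-edge swap in each bowtie — which preserves Hamiltonicity because both configurations pair the boundary vertices as $\{v_2,v_3\}$ and $\{v_4,v_5\}$ (a fact immediate from the bowtie's structure, no case analysis over colorings needed). The resulting color-count difference $\sum_i f(B_i,k)\ge 2m$ forces some Hamilton cycle to have color-bias at least $m$, exactly as in the paper.
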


\begin{proof}[Proof of~\cref{remove-bad-bowties}]
Let \(\mathcal{B}\) be a maximal collection of vertex-disjoint bad bowties in \(G\), and let \(|\mathcal{B}| = t\). We will show that \(t \leq 2rm\).

Suppose for contradiction that \(t > 2rm\). By the pigeonhole principle, there exists a color \(k \in [r]\) such that at least \(2m\) bowties in \(\mathcal{B}\) are \(k\)-bad. Take \(2m\) of these bowties and denote them by \(B_i = a_i b_i c_i d_i e_i\) for \(1 \leq i \leq 2m\). By definition, we may assume that \(f(B_i, k) > 0\) for each \(i\), possibly after relabeling the vertices of each \(B_i\) as \(a_i d_i e_i b_i c_i\) if needed.

Now define two subgraphs \(L_1\) and \(L_2\) of \(G\) with vertex set
\[
V(L_1) = V(L_2) = \bigcup_{i=1}^{2m} \{a_i, b_i, c_i, d_i, e_i\},
\]
and edge sets
\[
E(L_1) = \{a_i b_i, a_i c_i, d_i e_i : 1 \leq i \leq 2m\}, \quad
E(L_2) = \{b_i c_i, a_i d_i, a_i e_i : 1 \leq i \leq 2m\}.
\]
Each of \(L_1\) and \(L_2\) is a union of vertex-disjoint paths. Since \(|E(L_1)| = 6m \leq 12r^{2}m-1\) when $r\ge 2$, we then apply~\cref{include-path-forest} to obtain a Hamilton cycle \(H_1\) in \(G\) such that \(L_1 \subseteq H_1\). Now define \(H_2 := (H_1 \cup E(L_2)) \setminus E(L_1)\). Notice that this replacement preserves the Hamiltonicity of \(H_1\), and \(H_2\) is also a Hamilton cycle.

By the definition of the function \(f(\cdot, k)\), we have:
\[
|\chi^{-1}(k) \cap E(H_1)| - |\chi^{-1}(k) \cap E(H_2)|
= \sum_{i=1}^{2m} f(B_i, k) \geq 2m.
\]
It follows that either
\[
|\chi^{-1}(k) \cap E(H_1)| \geq \frac{n}{r} + m \quad \text{or} \quad
|\chi^{-1}(k) \cap E(H_2)| \leq \frac{n}{r} - m,
\]
contradicting the assumption that \(d_{\chi}(H)<m\) every Hamilton cycle $H$. Therefore, \(t \leq 2rm\).

Finally, define the induced subgraph \(G' \subseteq G\) by
\[
V(G') = V(G) \setminus \left(\bigcup_{B \in \mathcal{B}} V(B)\right).
\]
Since each bowtie has five vertices, we have
\[
|V(G) \setminus V(G')| \leq 5t \leq 10rm.
\]
Moreover, by the maximality of \(\mathcal{B}\), the graph \(G'\) contains no bad bowties.

\end{proof}

\subsection{Structure of subgraphs without bad bowties}
By~\cref{remove-bad-bowties}, we may take an induced subgraph \( G' \subseteq G \) with \( |V(G) \setminus V(G')| \le 10rm \), such that \( G' \) contains no bad bowties. For each vertex \( v \in V(G') \), define
\[
L(v) := \{ k \in [r] : \textup{there\ exists\ } u \in V(G') \textup{ such that } uv \in E(G') \text{ and } \chi(uv) = k \}.
\]
This set records the set of colors that appear on edges incident to \( v \) in \( G' \). For each \( k \in L(v) \), define
\[
N^k(v) := \left\{ u \in V(G') : uv \in E(G') \text{ and } \chi(uv) = k \right\},
\]
\[
N^{\neq k}(v) := \left\{ u \in V(G') : uv \in E(G') \text{ and } \chi(uv) \ne k \right\}.
\]
Moreover, for distinct \( k, \ell \in L(v) \), let \( N^{k,\ell}(v) \) denote the bipartite subgraph of \( G' \) induced by the bipartition \( (N^k(v), N^\ell(v)) \).

Given a vertex \(v \in V(G')\), recall that \(L(v)\) denotes the set of colors appearing on edges incident to \(v\) in \(G'\), and \(N^k(v)\) denotes the set of neighbors of \(v\) via edges colored \(k\). We define the following vertex classes in \(G'\) based on their colored neighborhood structure:

\medskip
\noindent
\textbf{Type A} (\(A(j,k,\ell)\)): A vertex \(v\in V(G')\) belongs to \(A(j,k,\ell)\) if
\[
L(v) = \{j,k\}, \ N^j(v) \text{ and } N^k(v) \text{ are independent sets},\ \text{and } \chi(e) = \ell \text{ for all } e \in E(G'[N^j(v) \cup N^k(v)]).
\]

\medskip
\noindent
\textbf{Type B} (\(B(k,\ell)\)): A vertex \(v\in V(G')\) belongs to \(B(k,\ell)\) if
\[
L(v) = \{k\},\ \text{and } \chi(e) = \ell \text{ for all } e \in E(G'[N^k(v)]).
\]

\medskip
\noindent
\textbf{Type C} (\(C(k)\)): A vertex \(v\in V(G')\) belongs to \(C(k)\) if
\begin{itemize}
\item all edges in $E[N^k(v)]$ have color \(k\), that is, \(\chi(u_1u_2) = k\) for all \(u_1,u_2 \in N^k(v)\),
\item the set \(N^{\neq k}(v)\) is independent in \(G'\),
\item and for every \(\ell \in L(v) \setminus \{k\}\), the bipartite subgraph \(N^{k,\ell}(v)\) induced by \((N^k(v), N^\ell(v))\) is monochromatic in color \(\ell\).
\end{itemize}

We now show that every vertex in \( G' \) falls into exactly one of the structural types defined above.

\begin{prop}\label{local'}
For every vertex \( v \in V(G'), \) exactly one of the followings holds:
\begin{enumerate}
    \item[\textup{(1)}] \( v \in A(j,k,\ell) \) for some distinct \( j, k, \ell \in [r] \);
    \item[\textup{(2)}] \( v \in B(k,\ell) \) for some \( k, \ell \in [r] \);
    \item[\textup{(3)}] \( v \in C(k) \) for some \( k \in [r] \).
\end{enumerate}
\end{prop}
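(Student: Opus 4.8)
The plan is to fix a vertex $v \in V(G')$ and show that the constraint ``$G'$ contains no bad bowtie through $v$'' forces the colored neighborhood of $v$ into one of the three rigid shapes. The first step is to record the basic consequences of having no $k$-bad bowtie centered at $v$. If $v_1 = v$, then $f(v v_2 v_3 v_4 v_5, k) = \mathbf{1}_k(vv_2) + \mathbf{1}_k(vv_3) + \mathbf{1}_k(v_4v_5) - \mathbf{1}_k(vv_4) - \mathbf{1}_k(vv_5) - \mathbf{1}_k(v_2v_3) = 0$ for every choice of five distinct vertices with the required adjacencies. I would extract two clean local rules from this. First, taking $v_2, v_3 \in N^k(v)$ and $v_4, v_5 \in N^k(v)$ all distinct, the edge $v_2v_3$ (if present) and the edge $v_4v_5$ (if present) must cancel; choosing the quadruple appropriately shows that \emph{within $N^k(v)$, all present edges have the same color}, call it $\ell_k$ (and if there are at least two independent edges inside $N^k(v)$ one can even pin this down, but we only need uniformity up to the small-matching slack that will be absorbed later — actually here we get it exactly since a single bad bowtie is already forbidden). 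Second, taking $v_2, v_3 \in N^k(v)$ and $v_4, v_5 \in N^{c}(v)$ for a color $c \ne k$: then $f = 1 + 1 + \mathbf{1}_k(v_4v_5) - 0 - 0 - \mathbf{1}_k(v_2v_3)$, which is nonzero unless $v_2v_3 \in E(G')$, $\chi(v_2v_3) = k$, and $v_4v_5 \notin E(G')$ — wait, this over-forces, so instead I would run the argument in the contrapositive: if $N^k(v)$ is \emph{not} independent and $N^c(v)$ is not independent for two distinct colors $k \ne c$ both in $L(v)$, a bad bowtie appears. This is the mechanism that collapses the color-set structure.

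The second step is the case split on $|L(v)|$. If $|L(v)| = 1$, say $L(v) = \{k\}$, then by the first local rule all edges inside $N^k(v) = N(v) \cap V(G')$ share one color $\ell$, and $v \in B(k,\ell)$; done. If $|L(v)| \ge 2$, I claim there is a distinguished color $k^* \in L(v)$ such that $N^{c}(v)$ is independent for every $c \ne k^*$. Indeed, if two distinct colors $c_1, c_2 \ne$ (anything) each had a non-independent $N^{c_i}(v)$, pick an edge $xy$ inside $N^{c_1}(v)$ and an edge $zw$ inside $N^{c_2}(v)$ (these four vertices are distinct since $N^{c_1}(v) \cap N^{c_2}(v) = \emptyset$), and form the bowtie $v\,x\,y\,z\,w$: its $f(\cdot, c_1)$ value is $1 + 1 + \mathbf{1}_{c_1}(zw) - 0 - 0 - \mathbf{1}_{c_1}(xy) = 2 + 0 - 1 = 1 \ne 0$ (using $\chi(xy) = c_1$ forced by rule one, $\chi(zw) = c_2 \ne c_1$), a contradiction. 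Hence at most one color has a non-independent same-color neighborhood; let $k$ be that color if it exists, and otherwise let $k$ be arbitrary in $L(v)$.

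The third step handles the sub-split inside $|L(v)| \ge 2$. If $N^{k}(v)$ is itself independent — so \emph{every} $N^c(v)$, $c \in L(v)$, is independent — then in particular $L(v) = \{j, k\}$ must have size exactly $2$: if $|L(v)| \ge 3$ with colors $j, k, \ell$, choose $x \in N^j(v)$, $y \in N^k(v)$, $z, w \in N^\ell(v)$; but all these same-color sets are independent, so to get a bad bowtie I instead use \emph{cross} edges — and here I'd invoke the fact (again read off from $f = 0$ on all bowties at $v$) that the color of any edge between $N^a(v)$ and $N^b(v)$ is constrained. Concretely, with $v_1 = v$, $v_2 \in N^a(v)$, $v_3 \in N^b(v)$ ($a \ne b$), $v_4, v_5 \in N^a(v)$: $f(\cdot, a) = 1 + 0 + 0 - 1 - 0 - \mathbf{1}_a(v_2v_3) = -\mathbf{1}_a(v_2v_3)$, so edges from $N^a(v)$ to $N^b(v)$ are never colored $a$, hence (symmetrically) never colored $b$ either; so for $|L(v)| = 2$ they all have the third color $\ell \notin \{j,k\}$, giving $v \in A(j,k,\ell)$, while for $|L(v)| \ge 3$ no color is available for a cross edge, yet the minimum-degree bound forces $N(v) \cap V(G')$ to be large and hence to contain such a cross edge — contradiction. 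This closes the ``$N^k(v)$ independent'' branch into Type A. Otherwise $N^{k}(v)$ is not independent: then by Step two $k$ is the \emph{unique} such color, so $N^{\ne k}(v) = \bigcup_{c \in L(v)\setminus\{k\}} N^c(v)$ is independent; rule one gives that all edges inside $N^k(v)$ have a single color, which I must now argue is $k$ itself (not some other $\ell$) — this follows by picking an edge $v_2v_3 \subseteq N^k(v)$ of the putative color $\ell \ne k$ together with $v_4 \in N^k(v), v_5 \in N^{c}(v)$ and checking $f(\cdot, k) \ne 0$; and the bipartite sets $N^{k,\ell}(v)$ are monochromatic in $\ell$ by the cross-edge computation above (with the roles arranged so the surviving indicator is forced). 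That is exactly the definition of $C(k)$. Finally, uniqueness of the type is immediate from the definitions since $L(v)$, independence of the $N^c(v)$, and the colors of the internal/bipartite edges are all determined by $v$.

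\textbf{Main obstacle.} The delicate point is the branch $|L(v)| \ge 3$: one must rule out a vertex seeing three or more colors with all same-color neighborhoods independent, and this requires combining the cross-edge color restrictions (from $f = 0$) with the minimum-degree hypothesis $\delta(G) = \tfrac{n}{2} + 6r^2 m$ to guarantee that $N(v) \cap V(G')$ is dense enough that some forbidden cross edge must actually occur — so the whole local argument is not purely about a single vertex, it quietly uses that $G'$ lost only $\le 10rm$ vertices. Getting the bookkeeping right there (and likewise confirming that in Type $C$ the internal color is $k$ rather than an arbitrary $\ell$, which also leans on $N^{\ne k}(v)$ being nonempty, hence on $|L(v)| \ge 2$) is where care is needed; the rest is a finite check over the handful of bowtie shapes, matching the ``good list'' picture.
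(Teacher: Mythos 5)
Your Steps 1 and 2 are sound (disjoint edges inside one $N^k(v)$ share a color; at most one color class has a non-independent same-color neighborhood), but Step 3 contains a genuine error that breaks the two branches it is supposed to close. Your cross-edge rule --- ``edges from $N^a(v)$ to $N^b(v)$ are never colored $a$, hence never $b$'' --- is false, and the computation supporting it is invalid in the branch where you apply it: the bowtie you write down needs an edge $v_4v_5$ \emph{inside} $N^a(v)$, which does not exist when all same-color neighborhoods are independent (and even granting such an edge, you dropped the term $\mathbf{1}_a(vv_5)=1$). In fact a bowtie centered at $v$ built from two disjoint cross edges between $N^j(v)$ and $N^k(v)$, both colored $j$, has $f(\cdot,c)=0$ for every $c$, so it is not bad; this is exactly the edge type $c(k)$ in the paper's ``good list,'' and it is realized by the extremal construction (vertices of $V_r$ see cross edges between $N^r(v)$ and $N^i(v)$ colored $i$). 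Consequently your conclusion that the all-independent branch forces $|L(v)|=2$ and type $A$ is wrong: such a vertex can have $|L(v)|\ge 3$ and be of type $C(k)$ (with $N^k(v)$ independent, the first condition of $C(k)$ holding vacuously), and your claimed contradiction for $|L(v)|\ge 3$ evaporates. A correct argument must split the all-independent branch further according to the colors actually appearing on cross edges, landing in $A$ or in $C$.

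A secondary gap: your ``rule one'' and the monochromaticity claims are only derived for \emph{disjoint} pairs of edges, since a bowtie centered at $v$ requires two disjoint edges in $N(v)$. To propagate the conclusion to overlapping edges you need a third edge of $N(v)$ disjoint from both, which is where the paper invokes the minimum degree to extract a matching of size $>5$ in $G'[N(v)]$ and then argues by transitivity of ``same type.'' You acknowledge using the degree bound only for the $|L(v)|\ge 3$ branch, but it is needed throughout. The paper's route --- classify each edge $uw$ of $G'[N(v)]$ by the color pattern of the triangle $vuw$ into types $a(j,k,\ell)$, $b(k,\ell)$, $c(k)$, show any two disjoint edges of different types yield a bad bowtie, and extend to all pairs via the matching --- avoids your case split entirely and, crucially, builds the possibility ``cross edge colored by an endpoint color'' into type $c(k)$ from the start.
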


\begin{proof}[Proof of~\cref{local'}]
Fix a vertex \( v \in V(G') \), and consider the induced subgraph \( G'[N(v)] \). For each edge \( uw \in E(G'[N(v)]) \), we classify \( uw \) according to the coloring of the triangle \( vuw \). More precisely, for distinct colors \( j, k, \ell \in [r] \), define:
\begin{itemize}
    \item \( uw \) is of type \( a(j,k,\ell) \) if and only if \( \{\chi(vu), \chi(vw)\} = \{j,k\} \) and \( \chi(uw) = \ell \);
    \item \( uw \) is of type \( b(k,\ell) \) if and only if \( \chi(vu) = \chi(vw) = k \) and \( \chi(uw) = \ell \);
    \item \( uw \) is of type \( c(k) \) if and only if either \( \chi(vu) = \chi(vw) = \chi(uw) = k \), or \( \{\chi(vu), \chi(vw)\} = \{k,\ell\} \) and \( \chi(uw) = \ell \) for some \( \ell \ne k \).
\end{itemize}
Notice that every edge $uw$ must belong to one of the above types, therefore it is well-defined. Furthermore, we claim that all edges in \( E(G'[N(v)]) \) must have the same type. To show this, consider any two edges \( u_1u_2, w_1w_2 \in E(G'[N(v)]) \). Since we have already listed all of the possible bowties which are not bad in~\cref{fig:bowtie_gallery}, then one can easily check that if \( u_1u_2 \) and \( w_1w_2 \) are vertex-disjoint, the two edges \(u_{1}u_{2}\) and \(w_{1}w_{2}\) must have the same type, otherwise they together with \(v\) will form a bad bowtie.

Now consider the case where \( u_1u_2 \) and \( w_1w_2 \) are not necessarily disjoint. By the minimum degree condition, each vertex \( u \in N(v) \cap V(G') \) satisfies
\[
|N(u) \cap N(v) \cap V(G')| \ge 12r^{2}m-10rm,
\]
we can greedily find a matching \( M \subseteq G'[N(v)] \) of size at least \( 6r^{2}m-5rm > 5 \) since $r\ge 2$. In particular, there exists an edge \( e \in E(M) \) that is disjoint from both \( u_1u_2 \) and \( w_1w_2 \). Then by the above claim, this edge \( e \) must be of the same type as both \( u_1u_2 \) and \( w_1w_2 \), implying that \( u_1u_2 \) and \( w_1w_2 \) are also of the same type.

Thus, all edges in \( G'[N(v)] \) are of the same type. A direct inspection of the definitions verifies that \( v \in A(j,k,\ell) \) (resp. \( B(k,\ell) \), \( C(k) \)) if and only if all edges in \( G'[N(v)] \) are of type \( a(j,k,\ell) \) (resp. \( b(k,\ell) \), \( c(k) \)). This completes the proof.
\end{proof}

Having established that every vertex in \( G' \) belongs to one of the types \( A(j,k,\ell) \), \( B(k,\ell) \), or \( C(k) \), we now proceed to classify the global structure of \( G' \) based on how these types can coexist. Remarkably, the local color constraints encoded in these vertex types lead to a complete structural dichotomy, as captured in the following theorem.

\begin{prop}\label{main-structure}
The following structural dichotomy holds for \(G'\):
\begin{enumerate}
\item[\textup{(1)}] If \(r \ge 2\) and \(r \ne 3\), then there exists some \(k \in [r]\) such that
\[
V(G') = \left( \bigcup_{\ell \in [r] \setminus \{k\}} B(k,\ell) \right) \cup C(k).
\]

\item[\textup{(2)}] If \(r = 3\), then either
\[
V(G') = A(1,2,3) \cup A(2,3,1) \cup A(3,1,2),
\]
or there exists some \(k \in [3]\) such that
\[
V(G') = \left( \bigcup_{\ell \in [3] \setminus \{k\}} B(k,\ell) \right) \cup C(k).
\]
\end{enumerate}
\end{prop}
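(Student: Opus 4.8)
The plan is to work in $G'$ (which has no bad bowties, and has large minimum degree since we deleted at most $10rm$ vertices). The first task is to understand which pairs of vertex types can be joined by an edge, and what color that edge can carry. So I would fix an edge $uv \in E(G')$ and run through the $\binom{r}{1}+\binom{r}{2}+\dots$ combinations of types for $u$ and $v$, using~\cref{local'} to know that each endpoint has one of the forms $A(j,k,\ell)$, $B(k,\ell)$, $C(k)$. The key leverage is the same one used in~\cref{local'}: pick a large matching $M$ in $G'[N(u)\cap N(v)]$ — which exists by the minimum degree bound, as in the proof of~\cref{local'} — and note that $v$ together with any edge $e\in M$, or $u$ together with $e$, must form a non-bad bowtie. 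Reading off the non-bad bowtie list in~\cref{fig:bowtie_gallery} then tightly constrains $\chi(uv)$ relative to the type-data of $u$ and of $v$: for instance, if $v\in B(k,\ell)$ then every neighbor of $v$ is reached by a color-$k$ edge, so $\chi(uv)=k$ and $u\in N^k(v)$, forcing $u$'s incident edges into $v$'s neighborhood to be colored $\ell$ when they land inside $N^k(v)$. Assembling these local incidence rules into a small "compatibility table" of allowed (type, type, edge-color) triples is the combinatorial heart of the argument.

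Next I would extract the global partition. I expect the compatibility table to show: if some vertex is of type $B(k,\ell)$ or $C(k)$, then the color $k$ is "contagious" — all of its $G'$-neighbors lie in $N^k$ or are themselves of a type built around $k$, and since $G'$ is connected (again from the minimum degree, $\delta(G')\ge \frac n2 - 10rm + 6r^2m > \frac n2$, so $G'$ is even Hamiltonian-connected-ish) this propagates to all of $V(G')$. That yields a single distinguished color $k$ with $V(G') = \bigl(\bigcup_{\ell\ne k}B(k,\ell)\bigr)\cup C(k)$, which is conclusion~(1), and the first alternative of~(2). The remaining possibility is that no vertex is of type $B$ or $C$, i.e.\ every vertex is of type $A$. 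For a type-$A$ vertex $v\in A(j,k,\ell)$, the two colors $j,k$ are "active at $v$" and the third color $\ell$ is the one forced on edges inside $N^j(v)\cup N^k(v)$. I would show, again via the bowtie constraints on edges between two type-$A$ vertices, that adjacent type-$A$ vertices must have \emph{the same} unordered triple-of-roles up to a consistent relabeling, and that this is only possible when $r=3$: with $r\ge 4$ there is a fourth color, and chasing an edge $uv$ with $u\in A(j,k,\ell)$, $v\in A(j',k',\ell')$ produces a bad bowtie unless the role assignments collapse in a way that is unavailable with $\ge 4$ colors — this is where the hypothesis $r\ne 3$ in part~(1) gets used. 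When $r=3$ the all-type-$A$ case organizes into $V(G')=A(1,2,3)\cup A(2,3,1)\cup A(3,1,2)$ by defining $V_i$ to be the set of vertices whose "passive" color is $i$ and checking consistency of the color rules across all edges.

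I would finish by verifying the two $r=3$ alternatives are genuinely exclusive — in the all-$A$ case no color is incident to every vertex's neighborhood, whereas in the $B/C$ case color $k$ is — so "either…or…" is the right statement.

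The main obstacle I anticipate is the bookkeeping in the compatibility table: one must carefully handle the case where $u$ and $v$ share few common neighbors (so that the "pick a matching in $N(u)\cap N(v)$" trick needs the disjointness argument of~\cref{local'} applied twice, once from $u$'s side and once from $v$'s), and one must be careful that a vertex can legitimately satisfy the defining conditions of more than one label if its colored neighborhood is small or degenerate — but the minimum degree bound guarantees each colored neighborhood that matters is large enough to pin the type down uniquely, so this degeneracy does not actually occur. The $r\ge 4$ elimination step for the all-type-$A$ scenario is the other delicate point, since it is the only place the global count of colors enters, and I would isolate it as a separate lemma.
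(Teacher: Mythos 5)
Your strategy is essentially the paper's: for an edge $uv\in E(G')$ you read off the admissible types of $u$ from the type of $v$ by inspecting the triangle $vuw$ through a common neighbor $w$ (the paper uses a single common neighbor, guaranteed by the minimum degree, rather than a whole matching, but that is immaterial), and you then propagate these compatibility rules over all of $V(G')$ using the connectivity of $G'$, which follows from $\delta(G')\ge\delta(G)-10rm>\tfrac12|V(G')|$. The $B/C$ alternative and the all-$A$ alternative then fall out exactly as in the paper's two cases.

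The one step where your proposed mechanism would fail is the elimination of the all-type-$A$ configuration when $r\ge 4$. You claim that an edge between two type-$A$ vertices ``produces a bad bowtie unless the role assignments collapse in a way that is unavailable with $\ge 4$ colors.'' The bowtie constraints do force the collapse: the component of any $v\in A(j,k,\ell)$ lies in $A(j,k,\ell)\cup A(k,\ell,j)\cup A(\ell,j,k)$, so only the three colors $j,k,\ell$ appear on $E(G')$. But that collapsed configuration is itself bowtie-free and perfectly consistent for any palette size $r\ge 3$, with the remaining $r-3$ colors simply unused on $G'$; no bad bowtie can ever be extracted from the mere existence of a fourth color, so purely local chasing cannot close this case. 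What actually kills it for $r\ge 4$ is the global color-bias hypothesis: if some color $c\notin\{j,k,\ell\}$ appears on no edge of $G'$, then every $c$-colored edge of any Hamilton cycle $H$ must meet the deleted set $V_0$, so $|\chi^{-1}(c)\cap E(H)|\le 2|V_0|\le 20rm$, which is far below $\tfrac{n}{r}-m$ in the relevant range of $m$, contradicting $d_\chi(H)<m$. (The paper's own write-up is terse at this exact point --- it asserts that the three-color collapse ``implies $r=3$'' --- but the needed input is the hypothesis on Hamilton cycles, not another bowtie.) With that step repaired, your argument coincides with the paper's.
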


\begin{proof}[Proof of~\cref{main-structure}]
For any \( uv \in E(G') \), since \( |N(v) \cap N(u) \cap V(G')| \geq 12r^{2}m-10rm>0 \),
we can find a vertex \( w \in N(v) \cap N(u) \cap V(G') \).
We will show that, given the type of $v$, 
we can determine all possibilities of the type of $u\in N(v)\cap V(G')$, as follows:
\begin{itemize}
\item If $v\in A(j,k,\ell)$, then $\{\chi(uv),\chi(vw)\}=\{j,k\}$, $\chi(uw)=\ell$,
and $u\in A(k,\ell,j)\cup A(\ell,j,k)$.
\item If $v\in B(k,\ell)$, then $\chi(vu)=\chi(vw)=k$, $\chi(uw)=\ell$, and $u\in C(\ell)$.
\item If \( v \in C(k) \), then either \( \chi(uv) = \chi(vw) = \chi(uw) = k \), in which case \( u \in C(k) \), or \( \{\chi(uv), \chi(vw)\} = \{k, \ell\} \) and \( \chi(uw) = \ell \) for some \( \ell \ne k \), in which case \( u \in C(k) \) or \( u \in B(k,\ell) \).
\end{itemize}
We now divide the proof into two cases:

\begin{Case}
\item Suppose \( A(j,k,\ell) = \emptyset \) for all distinct \( j,k,\ell \in [r] \).
By the analysis above, we must have \( C(k) \ne \emptyset \) for some \( k \in [r] \).
Pick any \( v \in C(k) \).
Then \( N(v) \cap G' \subseteq \left( \bigcup_{\ell \in [r] \setminus \{k\}} B(k,\ell) \right) \cup C(k) \),
and for any \( \ell \ne k \) and \( u \in B(k,\ell) \),
we have \( N(u) \cap G' \subseteq C(k) \).
It follows that the connected component of \( G' \) containing \( v \) is entirely contained in \( \left( \bigcup_{\ell \in [r] \setminus \{k\}} B(k,\ell) \right) \cup C(k) \).
By~\cref{remove-bad-bowties}, \(\delta(G')\ge \delta(G)-10rm>\frac{|V(G')|}{2}\), then \(G'\) is also connected, we can conclude that
\[
V(G') = \left( \bigcup_{\ell \in [r] \setminus \{k\}} B(k,\ell) \right) \cup C(k).
\]
\item Suppose \( A(j,k,\ell) \ne \emptyset \) for some distinct \( j,k,\ell \in [r] \).
For any \( v \in A(j,k,\ell) \), we have \( N(v) \cap G' \subseteq A(k,\ell,j) \cup A(\ell,j,k) \).
Thus, the connected component of \( G' \) containing \( v \) is contained in
\[
A(j,k,\ell) \cup A(k,\ell,j) \cup A(\ell,j,k).
\]
Again by~\cref{remove-bad-bowties}, \(\delta(G')\ge \delta(G)-10rm>\frac{|V(G')|}{2}\), we can see this connected component contains all vertices of \( V(G') \). Observe that each of these types is defined solely in terms of the colors \( j,k,\ell \), and any vertex of type \( A(j,k,\ell) \) is adjacent only to vertices of the other two types in this triple.
As a result, no vertex in \( G' \) can be adjacent to an edge colored with any color outside of \( \{j,k,\ell\} \), which implies that \( r = 3 \). Hence,
\[
V(G') = A(1,2,3) \cup A(2,3,1) \cup A(3,1,2).
\]
\end{Case}
This finishes the proof.
\end{proof}
\subsection{Finishing the proofs of~\cref{main-thm} and~\cref{thm:3-colorable}}
Based on the discussion in the above part, we first consider the case when \(r = 3\) and \(G'\) consists entirely of type \(A\) vertices; that is,
\[
V(G') = A(1,2,3) \cup A(2,3,1) \cup A(3,1,2),
\]
as in the statement~(2) of~\cref{main-structure}. By~\cref{remove-bad-bowties}, we can write
\[
V(G) = V_0 \cup A(1,2,3) \cup A(2,3,1) \cup A(3,1,2),
\]
where \(|V_0| \le 10rm\) consists of vertices involved in bad bowties.

Let \(H\) be any Hamilton cycle in \(G\). Since \(H\) contains at most two edges incident to each vertex in \(V_0\), at most \(2|V_0|\) vertices in \(A(1,2,3)\) may be adjacent in \(H\) to a vertex in \(V_0\), which yields at least \(|A(1,2,3)| - 2|V_0|\) vertices in \(A(1,2,3)\) are adjacent to vertices in \(A(2,3,1) \cup A(3,1,2)\). Moreover, since \(A(1,2,3)\) forms an independent set in $G'$, we can see the number of such cross-part edges in \(H\) is at least \(2(|A(1,2,3)| - 2|V_0|).\) On the other hand, the total number of vertices outside \(A(1,2,3) \cup V_0\) is
\[
|A(2,3,1)| + |A(3,1,2)| = n - |A(1,2,3)| - |V_0|.
\]
Combining both, we get:
\[
2(|A(1,2,3)| - 2|V_0|) \le n - |A(1,2,3)| - |V_0|,
\]
which simplifies to:
\[
|A(1,2,3)| \le \frac{n}{3} + |V_0| \le \frac{n}{3} + 10rm.
\]
By symmetry, the same bound holds for \(A(2,3,1)\) and \(A(3,1,2)\). Since the three sets partition \(V(G')\), we conclude that
\[
|A(i,j,k)| \ge \frac{n}{3} - 30rm \ \text{for each distinct triple } (i,j,k) \in [3].
\]

We now define three equal-sized vertex classes \(V_1, V_2, V_3\) such that
\[
|V_1| = |V_2| = |V_3| = \frac{n}{3}, \ \text{and} \ |V_i \, \triangle \, A(j,k,i)| \le 30rm
\]
for each distinct \(j,k,i \in [3]\). Let us define the error sets \(W_i := V_i \setminus A(j,k,i)\).

Since each vertex \(v \in A(2,3,1)\) satisfies that \(N^2(v)\) and \(N^3(v)\) are independent sets, and since \(A(2,3,1)\) corresponds closely to \(V_1\), it follows that any matching contained in \(E(V_1)\) must lie entirely within the symmetric difference \(W_1 = V_1 \setminus A(2,3,1)\). Thus, for any matching \(M \subseteq E(V_1)\),
\[
|V(M)| \le |W_1| \le 30rm < 100 r^2 m.
\]

Similarly, all edges in \(E(A(1,2,3), A(3,1,2))\) are colored with color \(1\), so any matching in \(E(V_2, V_3)\) that avoids color \(1\) must lie in the error set \(W_2 \cup W_3\). Hence, for any matching \(M' \subseteq E(V_2, V_3) \setminus \chi^{-1}(1)\),
\[
|V(M')| \le |W_2| + |W_3| \le 60rm < 100 r^2 m.
\]
The same argument applies to all other index triples \((i,j,k)\), completing the verification of condition~(1) in~\cref{thm:3-colorable}.

It remains to prove~\cref{main-thm} (for \(r \ne 3\)) and the second part of~\cref{thm:3-colorable}. By~\cref{remove-bad-bowties} and~\cref{main-structure}, we can partition the vertex set as
\[
V(G) = V_0 \cup C(k) \cup \left(\bigcup_{\ell \in [r] \setminus \{k\}} B(k,\ell)\right),
\]
for some \(k \in [r]\), where \(|V_0| \le 10rm\) consists of vertices involved in bad bowties.

Let \(H\) be any Hamilton cycle in \(G\), and fix \(\ell \ne k\). Each vertex in \(B(k,\ell)\) has all incident edges (within \(G'\)) colored \(\ell\), and contributes two \(\ell\)-colored edges to \(H\), unless it is adjacent to a vertex in \(V_0\), in which case it may contribute fewer. Also notice that \(H\) contains at most two edges incident to each vertex in \(V_{0}\), we can see the total number of \(\ell\)-colored edges in \(H\) satisfies
\[
|\chi^{-1}(\ell) \cap E(H)| \ge 2(|B(k,\ell)| - 2|V_0|),
\]
which implies
\[
|B(k,\ell)| \le \tfrac{1}{2} |\chi^{-1}(\ell) \cap E(H)| + 2|V_0| \le \tfrac{n}{2r} + \tfrac{m}{2} + 20rm \le \tfrac{n}{2r} + 30rm.
\]

Conversely, each \(\ell\)-colored edge in \(H\) must be incident to either a vertex in \(B(k,\ell)\) or in \(V_0\), so we also have
\[
|\chi^{-1}(\ell) \cap E(H)| \le 2(|B(k,\ell)| + |V_0|),
\]
which yields
\[
|B(k,\ell)| \ge \tfrac{1}{2} |\chi^{-1}(\ell) \cap E(H)| - |V_0| \ge \tfrac{n}{2r} - \tfrac{m}{2} - 10rm \ge \tfrac{n}{2r} - 20rm.
\]

Now compute the size of \(C(k)\) by subtracting the other parts:
\[
|C(k)| = n - \sum_{\ell \ne k} |B(k,\ell)| - |V_0| \ge n - (r-1)\left( \tfrac{n}{2r} + 30rm \right) - 10rm = \tfrac{r+1}{2r}n - 40r^2m,
\]
and similarly,
\[
|C(k)| \le n - (r-1)\left( \tfrac{n}{2r} - 20rm \right) = \tfrac{r+1}{2r}n + 20r^2m.
\]

Hence, we can define sets \(V_i\) for \(i \in [r]\) such that
\[
|V_k| = \tfrac{r+1}{2r}n, \ |V_\ell| = \tfrac{1}{2r}n \text{ for all } \ell \ne k,
\]
and
\[
|V_k \triangle C(k)| \le 40r^2m, \quad |V_\ell \triangle B(k,\ell)| \le 40rm \text{ for all } \ell \ne k.
\]

Let \(W_k := V_k \setminus C(k)\), and for each \(\ell \ne k\), define \(W_\ell := V_\ell \setminus B(k,\ell)\). We now verify the matching exclusion conditions, which yield the structural properties stated in the theorems.

First, since all edges within \(C(k)\) have color \(k\) by definition of type \(C(k)\), any matching \(M_{1} \subseteq E(V_k) \setminus \chi^{-1}(k)\) must be entirely contained in \(W_k\), and thus
\[
|V(M_{1})| \le |W_k| \le 40r^2m < 100 r^2 m.
\]
Similarly, for any \(\ell \ne k\), the bipartite graph \(E(V_\ell, V_k)\) contains only \(\ell\)-colored edges from \(B(k,\ell)\) to \(C(k)\). By definition of \(B(k,\ell)\), removing color \(\ell\) eliminates all these edges. Hence, any matching \(M_{2}\) in \(E(V_\ell, V_k) \setminus \chi^{-1}(\ell)\) is contained in \(W_\ell \cup W_k\), giving
\[
|V(M_{2})| \le |W_\ell| + |W_k| \le 40r^2m + 40rm < 100 r^2 m.
\]
Finally, all edges in \(E(V(G) \setminus V_k)\) are contained in the disjoint union of \(B(k,\ell)\) and \(W_\ell\), and removing each \(\ell\)-color eliminates the edges in \(B(k,\ell)\). Thus, any matching \(M_{3}\) in \(E(V(G) \setminus V_k)\) must lie in the union of the \(W_\ell\), and we conclude again:
\[
|V(M_{3})| \le \sum_{\ell \ne k} |W_\ell| \le 40r^2m<100 r^2 m.
\]
Therefore, all matching bounds required in~\cref{main-thm} (and the second part of~\cref{thm:3-colorable}) hold, completing the proof.

\section{Concluding remarks}
In this paper, we investigated Hamilton cycles in edge-colored graphs with nearly balanced color patterns and established a sharp structural stability result. Specifically, we proved that if every Hamilton cycle in an $n$-vertex graph with minimum degree exceeding $ \frac{n}{2} + \Theta(m) $ has color-bias at most $m$, then the graph must closely resemble one of the known extremal constructions. A key ingredient in our proof is the notion of a \emph{bad bowtie}, a small local configuration that certifies color imbalance. By analyzing the interaction between such local structures and global Hamiltonicity, we derived strong constraints on the overall graph structure.

Our work raises several natural questions for future research. While the degree condition $ \delta(G) \ge \frac{n}{2} + \Theta(m) $ is optimal for ensuring the existence of Hamilton cycles, it remains unclear whether the additive term $\Theta(m)$ is also necessary for structural stability when $r \ge 3$. For $r = 2$, we answer this affirmatively by constructing a graph with minimum degree $ \frac{n}{2} + o(m) $ in which every Hamilton cycle has color-bias less than $m$, yet the structure deviates from the known extremal forms. It is also natural to pursue stronger forms of stability. In particular, when $r = 2$, does every graph with $ \delta(G) \ge \frac{n}{2} + o(m) $ and small color-bias necessarily resemble either the classical extremal construction of Freschi, Hyde, Lada, and Treglown, or our new example?

Moreover, we suspect that the structural stability of color-balanced Hamilton cycles is not solely governed by the minimum degree condition. Motivated by this, we propose the following conjecture, which replaces the minimum degree assumption with a more robust, local resilience condition.
\begin{conj}
There exists a sufficiently large constant $C > 0$ such that the following holds. Let $G$ be an $n$-vertex graph with an edge-coloring using $r$ colors. Suppose that for every subset $U \subseteq V(G)$ with $|U| \le Cm$, the induced subgraph $G\setminus U$ contains a Hamilton cycle, and that every Hamilton cycle in $G$ has color-bias less than $m$. Then $G$ must structurally resemble one of the known extremal constructions.
\end{conj}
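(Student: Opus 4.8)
The plan is to follow the architecture of the proofs of~\cref{main-thm} and~\cref{thm:3-colorable} — bad-bowtie removal, local vertex classification, global dichotomy, and size counting — replacing each use of the minimum-degree hypothesis by a consequence of the resilience hypothesis. Write $k := Cm$ and take $C$ large compared with $r$. Three facts will be used freely: the resilience hypothesis forces $\delta(G)\ge k+1$ (else delete the neighbourhood of a minimum-degree vertex) and $\kappa(G)\ge k+1$ (else delete a small cut set), and it makes $G$ itself Hamiltonian so that $d_\chi$ is defined.

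\medskip\noindent\textbf{Step 1 (bad-bowtie removal).} I would rerun~\cref{remove-bad-bowties}, the only gap being that the Hamilton cycle $H_1\supseteq L_1$ can no longer be produced via~\cref{include-path-forest}. What is needed instead is the robustness lemma: \emph{if $G-U$ is Hamiltonian for every $|U|\le k$, then $G$ contains a Hamilton cycle through any linear forest with at most $k/10$ edges.} Granting this, a maximal packing $\mathcal B$ of vertex-disjoint bad bowties has $|\mathcal B|\le 2rm$ exactly as before (here $L_1$ has $6m\le k/10$ edges), and deleting $\bigcup_{B\in\mathcal B}V(B)$ yields a bad-bowtie-free induced subgraph $G'$ with $|V(G)\setminus V(G')|\le 10rm$, which still satisfies $\kappa(G')\ge k+1-10rm$ and the resilience property with parameter $k-10rm\gg r^2 m$.

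\medskip\noindent\textbf{Step 2 (local and global structure).} In~\cref{local'} and~\cref{main-structure} the minimum degree is used only to ensure, for every edge $uv$ of $G'$, that $N_{G'}(u)\cap N_{G'}(v)$ is large — enough to contain a matching of $G'[N(v)]$ of size $>5$ and a common neighbour disjoint from two prescribed edges — and, separately, that $G'$ is connected. Connectivity is immediate from $\kappa(G')>0$. For the common-neighbourhood requirement I would split the edges $uv$ of $G'$ according to whether $|N_{G'}(u)\cap N_{G'}(v)|$ is large (the original argument applies verbatim) or small; the plan is to show the edges of the second kind span only $O(r^2 m)$ vertices, so that they can be moved into the deleted set at negligible cost, or else are confined to a trivially structured region. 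With that done, every vertex outside a set of size $O(r^2 m)$ is typed $A$, $B$, or $C$, and the dichotomy of~\cref{main-structure} together with the concluding size-counting and matching-exclusion estimates carry over unchanged, with error terms of order $O(r^2 m)$.

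\medskip\noindent\textbf{Main obstacle.} Everything hinges on controlling edges with small common neighbourhood, which is really the assertion that the two hypotheses already force $\delta(G)\ge\tfrac n2+\Omega_r(m)$; once that is in hand one may essentially quote~\cref{main-thm} and~\cref{thm:3-colorable}, since resilience is used nowhere else (and the robustness lemma of Step~1 then becomes a corollary of~\cref{include-path-forest}). To attack this degree bootstrap, suppose $v$ has degree $d<\tfrac n2+\varepsilon m$, so that $Q:=V(G)\setminus N[v]$ has size at least $\tfrac n2-\varepsilon m$. Using resilience one can reroute a Hamilton cycle freely through $v$, $N(v)$, and the large set $Q$; the aim is to realise two genuinely different colour patterns on the two cycle-edges at $v$, or, if these are forced, to reroute a long segment so as to change a colour count — and thereby produce a Hamilton cycle of colour-bias at least $m$, a contradiction. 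The delicate point is that a vertex whose neighbourhood is almost monochromatic must \emph{not} yield such a contradiction, since that is precisely what occurs in the extremal constructions; quantifying the rotation–extension argument so that it fires exactly off the non-monochromatic behaviour, uniformly in the $r$-colouring, is where I expect the genuine difficulty of the conjecture to lie, and I would not be surprised if it needs ideas beyond those in the present paper.
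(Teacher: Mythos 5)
This statement is a \emph{conjecture}: the paper offers no proof of it, so there is nothing to compare your argument against, and the only question is whether your proposal actually closes the problem. It does not. What you have written is a plan whose central step is explicitly left open, and that step is not a technical loose end but essentially the entire content of the conjecture. Concretely, every quantitative input to \cref{local'} and \cref{main-structure} comes from the bound $|N_{G'}(u)\cap N_{G'}(v)|\ge 12r^2m-10rm$, which is a direct consequence of $\delta(G)\ge\frac n2+6r^2m$ (two such neighbourhoods must overlap in at least $2\delta(G)-n$ vertices); the resilience hypothesis only yields $\delta(G)\ge Cm+1$, which for $m=o(n)$ says nothing of the sort. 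Your ``degree bootstrap'' --- that resilience together with bounded colour-bias forces $\delta(G)\ge\frac n2+\Omega_r(m)$ --- is exactly the assertion that the conjecture's hypotheses imply the hypotheses of \cref{main-thm}, and you give no argument for it beyond a sketch of a rotation--extension strategy that you yourself note must somehow distinguish near-monochromatic neighbourhoods (which legitimately occur in the extremal examples) from genuinely mixed ones. Until that dichotomy is made to work, nothing downstream can be executed.

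A second, independent gap sits in Step 1: the ``robustness lemma'' asserting that vertex-deletion resilience yields a Hamilton cycle through any prescribed linear forest with at most $k/10$ edges is stated without proof and does not follow from the hypothesis in any obvious way. Deleting the vertices of the forest and finding a Hamilton cycle in the remainder gives no mechanism for threading the cycle \emph{through} the prescribed edges; \cref{include-path-forest} achieves this only via the $\frac n2+\frac{C+1}{2}$ degree condition, which you do not yet have. You flag that this lemma would become a corollary of \cref{include-path-forest} once the degree bootstrap is established, which is correct but circular as a matter of proof order: Step 1 is invoked before the bootstrap is available. In short, the proposal is a reasonable research programme that correctly isolates where the difficulty lies, but it is not a proof, and the conjecture remains open.
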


\section*{Acknowledgement}
The authors would like to thank Prof. Jie Han and Prof. Lior Gishboliner for their valuable comments. This work was completed during Wenchong Chen's visit to the ECOPRO group at IBS. He gratefully acknowledges the organizers of the ECOPRO 2025 Student Research Program. Wenchong Chen also thanks Xinbu Cheng and Zhifei Yan for insightful discussions.

\bibliographystyle{abbrv}
\bibliography{Hami}

\end{document}